\newcommand{\R}{{\mathbb R}}
\newcommand{\C}{{\mathbb C}}
\newcommand{\Z}{{\mathbb Z}}
\newcommand{\dbar}{\bar\partial}
\newcommand{\ddbar}{\partial\dbar}
\renewcommand{\phi}{\varphi}
\newcommand{\ccal}{\mathcal{C}}
\newcommand{\hcal}{\mathcal{H}}
\newcommand{\ncal}{\mathcal{N}}
\newcommand\Ha{\operatorname{Ha}^{(1)}}
\numberwithin{equation}{section}
\newcommand{\nj}{n_{\mbox{crit}}(\lambda_j)}
\newtheorem{theo}{{\sc Theorem}}
\newtheorem{cor}[theo]{{\sc Corollary}}
\newtheorem{lem}[theo]{{\sc Lemma}}
\newtheorem{prop}[theo]{{\sc Proposition}}
\newenvironment{rem}{\medskip\noindent{\it Remark:\/} }{\medskip}
\newenvironment{claim}{\medskip\noindent{\it Claim:\/} }{\medskip}
\title[Counting nodal lines which touch the boundary of an analytic domain]
{Counting  nodal lines which touch the boundary of an analytic
domain}
\author{John A. Toth}
\address{Department of Mathematics and Statistics, McGill University, Montreal, CANADA } \email{jtoth@math.mcgill.ca} \thanks{Research partially supported by NSERC grant \# OGP0170280 and a William Dawson Fellowship}
\author{Steve Zelditch}
\address{Department of Mathematics, Johns Hopkins University, Baltimore,
MD 21218, USA} \email{ szelditch@jhu.edu}
\thanks{Research partially supported by NSF grant \# DMS-0603850.}
\date{Sept 24, 2007}
\begin{document}
\maketitle

\begin{abstract} We consider the zeros on the boundary $\partial \Omega$ of
a   Neumann  eigenfunction $\phi_{\lambda}$ of a real analytic
plane domain $\Omega$. We prove that the number of its boundary
zeros is $O (\lambda)$ where $-\Delta \phi_{\lambda} = \lambda^2
\phi_{\lambda}$. We also prove that the number of boundary
critical points of either a Neumann or Dirichlet eigenfunction is
$O(\lambda)$. It follows that the number of nodal lines of
$\phi_{\lambda}$  (components of the nodal set) which touch the
boundary is of order $\lambda$. This upper bound is of  the same
order of magnitude as the length of the total  nodal line, but is
the square root of the Courant bound on the number of nodal
components in the interior.  More generally, the results are
proved for piecewise analytic domains.

\end{abstract}

\section{Introduction}
This article is concerned with  the high energy asymptotics of
nodal lines of Neumann (resp. Dirichlet) eigenfunctions
$\phi_{\lambda}$ on piecewise real analytic plane domains $\Omega
\subset \R^2$:
\begin{equation} \left\{ \begin{array}{ll} - \Delta \phi_{\lambda}  = \lambda^2
\phi_{\lambda} \;\; \mbox{in}\;\; \Omega, & \;\;
\\ \\\partial_{\nu} \phi_{\lambda}
 = 0 \;\;(\mbox{resp.} \phi_{\lambda_j} = 0) \; \mbox{on} \;\; \partial \Omega,
\end{array} \right. \end{equation}  Here,
$\partial_{\nu}$ is the interior unit normal. We denote by
$\{\phi_{\lambda_j}\}$ an orthonormal basis of eigenfunctions of the
boundary value problem corresponding to eigenvalues $\lambda_0 <
\lambda_1 \leq \lambda_2 \cdots$ enumerated according to
multiplicity. The nodal set
$$\ncal_{\phi_\lambda} = \{x \in \Omega: \phi_\lambda(x) = 0\} $$  is a curve
(possibly with self-intersections at the {\it singular points})
which intersects the boundary in the set $ \ncal_{\phi_{\lambda_j}} \cap
\partial \Omega$ of boundary nodal points. The motivating problem of this article is the
following: how many nodal lines (i.e. components of the nodal set)
touch the boundary? Since the boundary lies in the nodal set for
Dirichlet boundary conditions, we remove it from the nodal set
before counting components. Henceforth, the number of components
of the nodal set in the Dirichlet case means the number of
components of $\ncal_{\phi_\lambda} \backslash \partial \Omega.$

\begin{theo} \label{COR} Let $\Omega$ be a piecewise analytic domain
and  let  $n_{\partial \Omega}(\lambda_j)$ be the number of
components of the nodal set of the $j$th Neumann or Dirichlet
eigenfunction which intersect $\partial \Omega$.  Then
$n_{\partial \Omega}(\lambda_j) = O(\lambda_j).$
\end{theo}

For generic piecewise analytic plane domains, zero is a regular
value of all the eigenfunctions $\phi_\lambda$, i.e. $\nabla
\phi_{\lambda} \not= 0$ on $\ncal_{\phi_\lambda}$  \cite{U}; we
then call the nodal set regular. Each regular nodal set decomposes
into a disjoint union of connected components which are
homeomorphic either to circles contained in the interior
$\Omega^o$ of $\Omega$ or to intervals intersecting the boundary
in two points. We term the former `closed nodal loops' and the
latter `open nodal lines'. Thus, we  are counting open nodal
lines.  Such open nodal lines might `percolate'  in the sense of
\cite{Ze} (i.e. become infinitely long in the  scaling limit
$\Omega \to \lambda \Omega$), or they might form $\lambda^{-1}$
-`small' half-loops at the boundary. Our methods may be useful in
counting each type of component and it seems an interesting
direction for future work.

For the Neumann problem, the boundary nodal points are the same as
the zeros of the boundary values $\phi_\lambda |_{\partial
\Omega}$ of the eigenfunctions. The number of open nodal lines is
thus twice the number of boundary nodal points. Hence we can count
open nodal lines by counting boundary nodal points. In the Neumann
case, our result follows from:

\begin{theo}\label{BNP}  Suppose that $\Omega \subset \R^2$ is a piecewise real analytic  plane domain.
 Then the number $n(\lambda_j) = \# \ncal_{\phi_{\lambda_j}} \cap \partial
 \Omega$ of zeros of the boundary values  $\phi_{\lambda_j} |_{\partial
\Omega}$ of the $j$th Neumann  eigenfunction satisfies
$n(\lambda_j) \leq C
 \lambda_j$, where $C$ is a constant depending only on $\Omega$.
\end{theo}
This is a more precise version of   Theorem \ref{COR} in cases
such as integrable billiard domains (rectangles, discs, ellipses)
where the entire nodal set is connected due to the large grid of
self-intersection points of the nodal set. The analogous result in
the Dirichlet case is stated in Corollary \ref{BNPD}. Counting
boundary nodal points of eigenfunctions has obvious similarities
to measuring the length of the interior nodal line, and our
results show that the order of magnitude is the same. We recall
that  S. T. Yau conjectured that in all dimensions, the
hypersurface volume should satisfy  $c \lambda_j \leq
\hcal^{n-1}(\ncal_{\phi_j}) \leq C \lambda_j$ for some positive
constants $c, C$ depending only on $(M, g)$ \cite{Y1,Y2}. The
lower bound was proved in dimension two for smooth domains by
Br\"uning-Gomes  \cite{BG} and both the upper and lower bounds
were proved in all dimensions for analytic $(M, g)$ by
Donnelly-Fefferman \cite{DF,DF2} (see also \cite{L}). For general
$C^{\infty}$ Riemannian manifolds $(M, g)$ in dimensions $\geq 3$
there are at present only   exponential bounds \cite{HHL,HS}. Our
methods involve analytic continuation to the complex as in
\cite{DF,DF2,L}, and it is not clear how to extend them to
$C^{\infty}$ domains.

In comparison to the order $O(\lambda_j)$ of the number of
boundary nodal points,  the total number of connected components
of $\ncal_{\phi_{\lambda_j}}$ has the upper bound $O(\lambda_j^2)$
by the Courant nodal domain theorem. Only in very rare cases is it
known whether this upper bound  is achieved (in terms of order of
magnitude).  When the upper bound is achieved,  the number of open
nodal lines in dimension $2$ is of one lower order in $\lambda_j$
 than the  number of closed nodal loops. This effect is known from
 numerical experiments of eigenfunctions and random waves \cite{BGS,FGS}. The only rigorous result we
 know is the recent proof in \cite{NS} that the average number of
 nodal components of a random spherical harmonic is of
 order of magnitude $\lambda_j^2$. In special cases, the number of
 connected components can be much smaller than the Courant bound,
 e.g. two or three for arbitrarily high eigenvalues
 \cite{Lew}.

Our   methods also yield estimates on the number  of critical
points of $\phi_{\lambda_j}$ which occur on the boundary. We
denote the boundary critical set by
 $$\ccal_{\phi_{\lambda_j}}= \{q \in \partial
\Omega:  (d \phi_{\lambda_j}) (q) = 0\}. $$ In the case of Neumann
eigenfunctions,  $q \in \ccal_{ \phi_{\lambda_j}} \iff d (u_{\lambda_j} |_{\partial
\Omega}(q)) = 0 $ since the normal derivative automatically equals
zero on the boundary, while in the Dirichlet case $q \in \ccal_{\phi_{\lambda_j}}
\iff
\partial_{\nu} \phi_{\lambda_j}(q)= 0$ since the boundary is a level
set.

A direct  parallel to Yau's conjecture for interior
critical points of generic analytic metrics  would be the B\'ezout
bound $\# \ccal_{\phi_{\lambda_j} } \leq C \lambda^n$ in dimension
$n$, and $\lambda^{n-1}$ for boundary critical points.  However,
this bound is unstable since the critical point sets do not even
have to be discrete when the eigenfunctions have degenerate
critical points, and the true count in the discrete case might
reflect the size of the determinant of the Hessian at the critical
point. Note that there is no non-trivial lower bound on the number
of interior critical points \cite{JN}. Related complications occur for boundary critical points.
For instance,  the  radial eigenfunctions on the disc are constant
on the boundary; thus,  boundary critical point sets need not be isolated.  We therefore
need to add a non-degeneracy condition on the derivative
$\partial_t (\phi_{\lambda} |_{\partial \Omega})$ to ensure that
its zeros are isolated and can be counted by our methods. 

We phrase the condition in terms of the  Pompeiu problem and Schiffer conjecture, which
asserts the disc is the only smooth plane domain possessing a Neumann eigenfunction which
is constant on the boundary. In  \cite{Ber}) it is proved that the disc
      is the only bounded simply connected plane domain possessing an
       infinite sequence of such Neumann eigenfunctions. We  say that the Neumann
problem for a bounded domain has the  {\it asymptotic Schiffer  property} if there exists $C
> 0$ such that, for all Neumann eigenfunctions $\phi_{\lambda}$,
   \begin{equation} \label{NEUND}
\frac{\|\partial_t \phi_{\lambda}\|_{L^{2} (\partial
\Omega)}}{\|\phi_{\lambda}\|_{L^{2}(\partial \Omega)}}
      \geq e^{- C \lambda} \end{equation}
     Here, the $L^2$ norms refer to the restrictions of the
     eigenfunction to  $\partial \Omega$. 
      It seems plausible that Berenstein's result might extend to this asymptotic Schiffer property,
      i.e. that the 
     disc is  the only example where (\ref{NEUND}) fails for an infinite sequence.

  \begin{theo}\label{CRITS}  Let $\Omega \subset \R^2$ be piecewise real analytic.
  Suppose that $\phi_{\lambda_j}|_{\partial \Omega}$ satisfies the asymptotic Schiffer  condition (\ref{NEUND})  in the Neumann case.
   Then the
   number of $n_{\mbox{crit}}(\lambda_j) = \# \ccal_{\phi_{\lambda_j} }$ of
   critical points of a Neumann or Dirichlet eigenfunction $\phi_{\lambda_j}$
which lie  on $\partial \Omega$ satisfies  $\nj \leq C \lambda_j$
for some $C$ depending only on $\Omega$.
\end{theo}
This is apparently the first general result on the asymptotic
number of  critical points of eigenfunctions as $\lambda_j \to
\infty$.  Our results on boundary
critical points give some positive evidence for the B\'ezout upper
bound in the analytic case, and  simple examples such as the disc
show that there does not exist a non-trivial lower bound (see \S
\ref{EXAMPLES}). In general,  counting critical points is
subtler than counting zeros or singular points (i.e. points where
$\phi_{\lambda_j}(x) = d \phi_{\lambda_j}(x) = 0$; see
\cite{HS,HHL}).

   In the case of Dirichlet eigenfunctions, endpoints of open
nodal lines are always boundary critical points, since they must
be singular points of $\phi_{\lambda_j}$.  Hence, an upper bound
for $\nj$ also gives an upper bound for the number of open nodal
lines.

\begin{cor}\label{BNPD}  Suppose that $\Omega \subset \R^2$ is a piecewise real analytic  plane domain.
Let  $n_{\partial \Omega}(\lambda_j)$ be the number of open nodal
lines of the $j$th Dirichlet eigenfunction, i.e. connected
components of $\{\phi_{\lambda_j} = 0\} \subset \Omega^o$ whose
closure  intersects $\partial \Omega$. Then $n_{\partial
\Omega}(\lambda_j) = O(\lambda_j).$
\end{cor}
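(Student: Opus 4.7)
The strategy is to derive the Corollary directly from Theorem \ref{CRITS}. In the Dirichlet case, Theorem \ref{CRITS} applies without the asymptotic Schiffer hypothesis (\ref{NEUND}) and yields $|\ccal_{\phi_{\lambda_j}}| \leq C \lambda_j$. It therefore suffices to bound the number of open nodal lines by a constant multiple of the number of boundary critical points.

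The key geometric observation, flagged in the paragraph preceding the Corollary, is that every boundary endpoint $q$ of an open nodal line must lie in $\ccal_{\phi_{\lambda_j}}$. Indeed, let $\gamma$ be a connected component of $\ncal_{\phi_{\lambda_j}} \cap \Omega^o$ with $q \in \overline{\gamma} \cap \partial \Omega$. The Dirichlet condition forces $\phi_{\lambda_j} \equiv 0$ along $\partial \Omega$, so near $q$ the nodal set contains two distinct analytic branches: a piece of $\partial \Omega$ itself and a piece of $\overline{\gamma}$. If instead $\nabla \phi_{\lambda_j}(q) \neq 0$, the implicit function theorem would force the nodal set near $q$ to be a single smooth curve, which would have to coincide with $\partial \Omega$; this contradicts the presence of points of $\gamma \subset \Omega^o$ arbitrarily near $q$. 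Hence $\nabla \phi_{\lambda_j}(q) = 0$ and $q \in \ccal_{\phi_{\lambda_j}}$.

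The remaining step is to pass from the bound on $|\ccal_{\phi_{\lambda_j}}|$ to a bound on $n_{\partial \Omega}(\lambda_j)$. Each open nodal line has at least one (and generically exactly two) boundary endpoints, each lying in $\ccal_{\phi_{\lambda_j}}$. Near such a $q$, real-analyticity of $\phi_{\lambda_j}$ implies that the local structure of the nodal set is determined by the leading homogeneous term in the Taylor expansion of $\phi_{\lambda_j}$ at $q$, whose degree equals the vanishing order $k_q \geq 2$; that polynomial factors into at most $k_q$ real-analytic branches through $q$, exactly one of which coincides with $\partial \Omega$. Therefore at most $k_q - 1$ open nodal lines terminate at $q$. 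In the generic setting of \cite{U}, every boundary critical point is non-degenerate so $k_q = 2$ for all $q$, which immediately yields $n_{\partial \Omega}(\lambda_j) \leq \half |\ccal_{\phi_{\lambda_j}}| = O(\lambda_j)$.

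The main obstacle is handling non-generic boundary critical points at which $\phi_{\lambda_j}$ vanishes to higher order $k_q > 2$, since several open nodal lines can then share $q$ as an endpoint. The cleanest resolution is to establish the weighted estimate $\sum_{q \in \ccal_{\phi_{\lambda_j}}} (k_q - 1) = O(\lambda_j)$, which I expect follows by tracking vanishing orders through the analytic-continuation machinery that proves Theorem \ref{CRITS} (the same machinery that, in the Donnelly--Fefferman framework, controls the total length of the nodal set). This refined count would close the argument in full generality.
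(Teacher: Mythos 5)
Your route is the same one the paper takes: reduce the Corollary to the critical point count of Theorem \ref{CRITS} via the observation that boundary endpoints of open nodal lines are singular points, hence boundary critical points. What you add is a careful accounting of what happens at a degenerate boundary critical point $q$ where $\phi_{\lambda_j}$ vanishes to order $k_q>2$ — a point the paper's one-sentence justification (``an upper bound for $\nj$ also gives an upper bound for the number of open nodal lines'') glosses over, since several nodal branches can indeed share such a $q$.

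Your proposed fix, the weighted bound $\sum_{q}(k_q-1)=O(\lambda_j)$, is not merely plausible — it is what the paper's machinery actually delivers, so there is no real gap to conjecture around. Proposition \ref{DFnew} is proved via the Poisson--Jensen/Green's function representation, in which $\ddbar\log|u^{\C}_{\lambda}|$ is a sum of point masses \emph{weighted by multiplicities}; thus $n(\lambda, \cdot)$ there is a multiplicity count of the complex zeros of $u_{\lambda}^{\C}=\partial_{\nu}\phi^{\C}_{\lambda_j}|_{\partial\Omega_{\C}}$. It remains to check that the order of vanishing of $\partial_{\nu}\phi_{\lambda_j}|_{\partial\Omega}$ at $q$ equals $k_q-1$, which dominates your count of branches entering $\Omega^o$. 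This follows because the leading degree-$k_q$ Taylor term $P_{k_q}$ of $\phi_{\lambda_j}$ at $q$ is a harmonic homogeneous polynomial in two variables; in Fermi coordinates $(s,r)$ with $\partial\Omega=\{r=0\}$ the Dirichlet condition forces $r\mid P_{k_q}$, and since a two-dimensional harmonic homogeneous polynomial has only simple linear factors, $r^2\nmid P_{k_q}$, so $\partial_r\phi_{\lambda_j}(s,0)=cs^{k_q-1}+O(s^{k_q})$ with $c\neq0$. Combining the multiplicity count from Proposition \ref{DFnew} with your local branch count $k_q-1$ closes the argument without any genericity assumption, and matches the paper. One small inaccuracy to fix: the $k_q$ real-analytic branches are exactly $k_q$ (not merely ``at most'') distinct smooth curves through $q$, one tangent to $\partial\Omega$, again because the leading harmonic term has distinct linear factors; the ``at most $k_q-1$'' on open nodal lines then comes from the possibility that two of the remaining half-branches belong to the same component.
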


The question may arise why we are so concerned with   piecewise
analytic domains $\Omega^2 \subset \R^2$. By this,  we mean a
compact domain with piecewise analytic boundary, i.e. $\partial
\Omega$ is a union of a finite number of piecewise analytic curves
which intersect only at their common endpoints (cf. \cite{HZ}).
Our interest in such domains  is due to the fact that many
important types of domains in classical and quantum billiards,
such as the Bunimovich stadium or Sinai billiard, are only
piecewise analytic. Their nodal sets have been the subject of a
number of numerical studies (e.g. \cite{BGS,FGS}). In \cite{TZ} we
consider piecewise analytic Euclidean plane domains with ergodic
billiards, which can never be fully analytic.

The results stated above are  corollaries of one basic result
concerning the {\it complex zeros and critical points} of analytic
continuations of Cauchy data of eigenfunctions. When $\partial
\Omega \in C^{\omega}$,  the eigenfunctions can be holomorphically
continued to an open tube domain  in $\C^2$ projecting over an
open neighborhood $W$ in $\R^2$ of $\Omega$ which is independent
of the eigenvalue.  We denote by $\Omega_{\C} \subset \C^2$ the
points $\zeta = x + i \xi \in \C^2 $ with $x \in \Omega$.  Then
$\phi_{\lambda_j}(x)$ extends to a holomorphic function
$\phi_{\lambda_j}^{\C}(\zeta)$ where $x \in W$ and where $ |\xi|
\leq \epsilon_0$ for some $\epsilon_0 > 0$.  We  mainly use the
complexifications to obtain upper bounds on real zeros, so are not
concerned with  the maximal $\epsilon_0$, i.e. the `radius of the
Grauert tube' around $\partial \Omega$, and  do not include the
radius in our notation for the complexification.

Assuming $\partial \Omega$ real analytic, we   define the
(interior) complex nodal set by
$$\ncal_{\phi_{\lambda_j}}^{\C} = \{\zeta \in  \Omega_{\C}:
\phi_{\lambda_j}^{\C}(\zeta) = 0 \},  $$ and the (interior)
complex critical point set by
$$\ccal_{\phi_{\lambda_j} }^{\C} = \{\zeta \in  \Omega_{\C}: d \phi_{\lambda_j}^{\C} (\zeta) =
0\}.$$ We are mainly interested in the restriction of
$\phi_{\lambda_j}^{\C}$ to the complexification $(\partial
\Omega)_{\C}$ of the boundary, i.e.  the open complex curve in
$\C^2$ obtained by analytically continuing a real analytic
parameterization $Q: S^1 \to \partial \Omega$. The map $Q$ admits
a holomorphic extension to an annulus $A(\epsilon)$ (see
(\ref{ANNULUS})) around the parameterizing circle $S^1$ and its
image $Q_{\C}(A(\epsilon)) \subset \C^2$ is an annulus in the
complexification of the boundary; it is analogous to a   Grauert
tube around the real analytic boundary  in the sense of
\cite{GS1,LS1}. We then define the boundary complex nodal set by
$$\ncal_{\phi_{\lambda_j}}^{\partial \Omega_{\C}} = \{\zeta \in  \partial \Omega_{\C}:
\phi_{\lambda_j}^{\C}(\zeta) = 0 \},  $$ and the (boundary)
complex critical point set by
$$\ccal_{\phi_{\lambda_j} }^{\partial \Omega_{\C}} = \{\zeta \in  \partial \Omega_{\C} : d \phi_{\lambda_j}^{\C} (\zeta) =
0\}.$$

More generally, we may assume $\partial \Omega$ is piecewise real
analytic and holomorphically extend eigenfunctions to the analytic
continuations of the real analytic boundary arcs. The radii of
these  analytic continuations of course shrink to zero at the
corners.

\begin{theo} \label{mainthm} Suppose that $\Omega \subset \R^2$ is a piecewise real analytic  plane
domain, and denote by $(\partial \Omega)_{\C}$ the union of the
complexifications of its real analytic boundary components.

\begin{enumerate}

\item  Let
  $n(\lambda_j, \partial \Omega_{\C} ) = \# \ncal_{\phi_{\lambda_j}}^{\partial \Omega_{\C}}.$
 Then,
$n(\lambda_j, \partial \Omega_{\C})
 = O(\lambda_j). $ The $O$-symbol is uniform in the radius of
 $(\partial \Omega)_{\C}$.

 \item Suppose that the Neumann eigenfunctions satisfy (\ref{NEUND}) and
  let $n_{\mbox{crit}}(\lambda_j, \partial \Omega_{\C}) = \# \ccal_{\phi_{\lambda_j} }^{\partial
 \Omega_{\C}}$.
 Then, $n_{\mbox{crit}}(\lambda_j, \partial \Omega_{\C})
 = O(\lambda_j). $

\end{enumerate}

\end{theo}

The  theorems on real nodal lines and critical points  follow from
the fact that real zeros and critical points are also complex
zeros and critical points, hence
\begin{equation} n(\lambda_j) \leq n(\lambda_j, \partial \Omega_{\C} );  \;\;\;\; \nj \leq
n_{\mbox{crit}}(\lambda_j, \partial \Omega_{\C}). \end{equation}
All of the results are sharp, and are already obtained for certain
sequences of eigenfunctions  on a disc (see \S \ref{EXAMPLES}). If
the condition (\ref{NEUND}) is not satisfied, the boundary value
of $\phi_{\lambda}$ is $o(e^{- C \lambda})$ close to a constant
for all $\lambda$.  It is very likely that this forces the
boundary values to be constant, but it would take us too far
afield in this article to prove it.

Although our main interest is in counting open nodal lines, the
method of proof of Theorem \ref{mainthm} generalizes from
$\partial \Omega$ to a rather large class of  real analytic curves
$C \subset \Omega$, even when $\partial \Omega$ is not real
analytic. Let us call a real analytic curve $C$ a {\it good} curve
if there exists a constant $a > 0$ so that   \begin{equation}
\label{GOOD} \frac{\|\phi_{\lambda}\|_{L^{2} (\partial
\Omega)}}{\|\phi_{\lambda}\|_{L^{2}(C)}}
     \leq e^{a \lambda} .\end{equation}
     Here, the $L^2$ norms refer to the restrictions of the
     eigenfunction to $C$ and to $\partial \Omega$.
The following result deals with the case where $C \subset
\partial \Omega$ is an {\em interior} real-analytic
   curve. The  real curve $C$ may then
be holomorphically continued to a complex curve  $C_{\C} \subset
\C^2$ obtained by analytically continuing a real analytic
parametrization of $C$.

   \begin{theo}\label{CNP}
 Suppose that $\Omega \subset \R^2$ is a $C^{\infty}$ plane
domain, and let $C \subset \Omega$ be a good  interior real
analytic curve in the sense of (\ref{GOOD}). Let
  $n(\lambda_j, C) = \# \ncal_{\phi_{\lambda_j} } \cap C $ be the number of intersection points of
  the nodal set of the $j$-th Neumann (or Dirichlet) eigenfunction  with $C$.  Then $n(\lambda_j, C) = O(
 \lambda_j)$.
 \end{theo}

Although the upper bounds are sharp for some domains, we do not
present necessary or sufficient conditions on a domain that the
bounds on zeros or critical points are achieved on that domain for
some sequence of eigenfunctions. We do not know any domain for
which they are not achieved, but there are few domains where the
bounds can be explicitly tested. The boundary (or rather its unit
ball bundle) is naturally viewed as a kind of quantum `cross
section' of the wave group \cite{HZ}.   The growth rate of the
modulus and zeros of Cauchy data of complexified eigenfunctions
depend on what kind of   `cross section' the boundary provides. In
work in progress  \cite{TZ}, we show that at least for some
piecewise analytic domains with ergodic billiards, the
  the number  of complex zeros of $\phi_{\lambda_j}^{\C}|_{\partial \Omega_{\C}}$
    is $\sim C \lambda_j$. It seems that this asymptotic reflects
    the fact that the boundary is a representative cross section in this
    case.

We note that some of the methods and results of this paper are restricted
to dimension two. In  higher dimensions, zeros of the Cauchy data are not isolated
and we would have to count numbers of components of the boundary nodal set. This
seems inaccessible at present.

The organization of this article is as follow:  In \S \ref{AC}, we
use the layer potential representations of Cauchy data of
eigenfunctions, or equivalently the representation in terms of the
Calderon projector, to analytically continue eigenfunctions.  The
analytic continuation of the layer potential representation has
previously been studied by Vekua \cite{V}, Garabedian \cite{G},
and in the form we need by  Millar \cite{M1,M2,M2}. The analytic
continuation is  somewhat subtle due to the presence of logarithms
in  the layer potentials, and does not appear to be well-known; so
we present complete details (which are sometimes sketchy in the
original articles). In \S \ref{G}, we relate growth of complex
zeros to growth of the log modulus of the complexified
eigenfunctions. In \S \ref{CNP} - \S \ref{VOLTERRA}, we prove the
main results. The complexified layer potential representation is
used to obtain an upper bound on the growth rate of the
complexified eigenfunctions $\phi_{\lambda}^{\C}$ in a fixed
complex tube around the boundary as $\lambda \to \infty$.  The
estimate is simpler for interior curves (\S \ref{CNP}) since on
the boundary the analytic continuation involves a Volterra
operator that must be inverted. Almost the same method gives
analogous results on critical points; for the sake of brevity, the
argument is only sketched in  \S \ref{CP}.

We would like to thank L. Ehrenpreis, C. Epstein, P. Koosis and M. Zworski  for informative
discussions of analytic continuations of eigenfunctions and logarithmic
 integrals.

\section{\label{EXAMPLES} Examples}

We begin with  examples illustrating some of the issues we face.
Eigenfunctions are only computable in (quantum) completely
integrable cases, and at present the only known examples are the
unit disc, ellipses and rectangles. It is a classical conjecture
of Birhoff that ellipses are the only smooth Euclidean plane
domains with integrable billiards, so one does not expect further
explicitly computable examples. In addition, one can construct
approximate eigenfunctions, or quasi-modes for many further
domains \cite{BB}; it is plausible, although it is not proved
here, that our results extend to real analytic quasi-modes.

\subsection{The unit disc $D$}

The standard   orthonormal basis  of real valued Neumann
eigefunctions is given in polar coordinates by $\phi_{m, n}(r,
\theta) = C_{m, n} \sin m \theta J_m(j'_{m, n} r), $ (resp. $
C_{m, n} \cos m \theta J_m(j'_{m, n} r)$) where $j'_{m,n }$ is the
$n$th critical point of the Bessel function $J_m$ and where
$C_{m,n}$ is the normalizing constant. The eigenvalue is
$\lambda_{m, n}^2 = (j'_{m, n})^2$. The parameter $m$ is referred
to as the angular momentum. Dirichlet eigenfunctions have a
similar form with $j'_{m,n}$ replaced by the $n$th zero $j_{m, n}$
of $J_m$. Nodal loops  correspond to zeros of the radial factor
while open nodal lines correspond to zeros of the angular factor.

If we fix $m$ and let $\lambda_{m, n} \to \infty$ we obtain a
sequence of eigenfunctions of bounded angular momentum but high
energy. In the $\sin$ case (e.g.), the open  nodal lines consist
of the union of rays $C_m = \{\theta = \frac{2\pi k}{m}, k = 0,
\dots, \frac{m-1}{m}\}$ through the $m$th roots of unity.  Hence,
for each $m$ there exist sequences of eigenfunctions with $\lambda
\to \infty$ but with $m$ open nodal lines; hence,  there exists no
lower bound on the number of nodal lines touching the boundary in
terms of the energy. This example also shows that there cannot
exist a general unconditional result counting intersections of
nodal lines with interior curves, since $\phi_{m, n}|_{C_m} \equiv
0$ and hence the `number' of nodal points on the interior curve
$C_m$ is infinite. In particular, $C_m$ is not `good' in the sense
of (\ref{GOOD}).

At the opposite extreme are the whispering gallery modes which
concentrate along the boundary. These are eigenfunctions of
maximal angular momentum (with given energy), and $\lambda_m \sim
m$. As discussed in \cite{BB}, they are asymptotically given by
the real and imaginary parts of  $e^{i \lambda_m s}
Ai_p(\rho^{-1/3} \lambda_m^{2/3} y)$. Here, $Ai_p(y):= Ai(- t_p +
y)$ where  $Ai$ is  the Airy function and $\{- t_p\}$ are its
negative zeros. Also, $s$ is arc-length along $\partial D$, $\rho$
is a normalizing constant and   $y = 1 - r$.
 Whispering gallery modes saturate the upper
bound on the number of open nodal lines.

\subsection{An ellipse}

We express an ellipse in the form $x^2 + \frac{y^2}{1 - a^2} = 1,
\;\;\; 0 \leq a < 1, $ with  foci  at $(x, y) = (\pm a, 0)$. We
define elliptical coordinates $(\phi,\rho)$ by $(x, y) = (a \cos
\phi \cosh \rho, a \sin \phi \sinh \rho). $ Here, $0 \leq \rho
\leq \rho_{\max} = \cosh^{-1} a^{-1}, \;\; 0 \leq \phi \leq 2 \pi.
$ The lines $\rho = const$ are confocal ellipses and the lines
$\phi = const$ are confocal hyperbolae. The foci occur at $\phi =
0, \pi$ while the origin occurs at $\rho = 0, \phi =
\frac{\pi}{2}. $

The eigenvalue problem  separates into a pair of Mathieu
equations,
\begin{equation} \label{mathieu}
\left\{ \begin{array}{l} \partial_{\phi}^2 G - c^2 \cos^2 \phi G =
-
\lambda^{2} G \\ \\
\partial_{\rho}^2 F - c^2 \cosh^2 \rho F =
\lambda^{2} F \end{array} \right.
\end{equation}
where $c$ is a certain parameter. The eigenfunctions have the form
$\Psi_{m, n}(\phi, \rho) = C_{m,n} F_{m,n}(\rho) \cdot
G_{m,n}(\phi)$ where, $F_{m,n}(\rho) = \,Ce_m(\rho, \frac{k_n
c}{2})$ and $ G_{m,n}(\phi) = ce_m(\phi, \frac{k_n c}{2})$ (and
their sin analogues). Here,   $ce_m, Ce_m$ are special Mathieu
functions (cf. \cite{C} (3.10)-(3.2)).  The Neumann or Dirichlet
boundary conditions determine the eigenvalue parameteres $k_n c$.
The nodal lines are of course given by $\{G = 0\} \cup \{F = 0\}$.
For more details and computer graphics of elliptic bouncing ball
modes we refer to \cite{C}; the original work was done by
Keller-Rubinow.

A new feature in comparison to the disc  is the existence of
Gaussian beams (a bouncing ball mode) along the minor axis, which
is a stable elliptic bouncing ball orbit. Such bouncing ball modes
do not exist in the disc and occur when $m$ is fixed and $n \to
\infty$.

The eigenfunctions which are the Gaussian beams are characterized
as follows: Since the minor axis is
$$I= \{ (\rho,\phi) \in [0,\rho_{\max}] \times [0,2\pi]; \,  \phi = \frac{\pi}{2} \}.$$
one looks for eigenfunctions with mass concentrated along this
interval. Consider the extremal energy levels that satisfy
$$ c^{2}\lambda^{-2} = 1 + {\mathcal O}(\lambda^{-1}).$$
One rewrites
(\ref{mathieu}) in the form:

\begin{equation} \label{mathieu2}
\left\{ \begin{array}{l} -\lambda^{-2} \partial_{\phi}^2 G  +  (\lambda^{-2}c^2 \cos^2 \phi - 1) G = 0\\ \\
-\lambda^{-2} \partial_{\rho}^2 F +  ( \lambda^{-2}c^2 \cosh^2 \rho + 1)
F =0 \end{array} \right.
\end{equation}
Given the choice of energy level,
$$ \lambda^{-2} c^{2} \cos^{2} \phi - 1 = \cos^{2} \phi -1 + {\mathcal O}(\lambda^{-1}) = -\sin^{2}(\phi) + {\mathcal O}(\lambda^{-1}).$$
The potential $V_1(\phi) = \cos^{2} \phi - 1$ has a nondegenerate
minimum at $\phi = \frac{\pi}{2}$. So, the solutions $G = G_{m,n}$
to the first equation in (\ref{mathieu2}) are asymptotic to ground
state Hermite functions. More precisely,
\begin{equation} \label{asymptotics1}
G_{m,n}(\phi;\lambda) = c_{m,n}(\lambda)e^{- \lambda \cos^{2} \phi }  (
1 + {\mathcal O}(\lambda^{-1})).
\end{equation}
In the second equation in (\ref{mathieu2}) the potential is
$V_{2}(\rho) = \cosh^{2} \rho + 1 + {\mathcal O}(\lambda^{-1}) >0$ for
$\lambda \geq \lambda_{0}$ sufficiently large. In this case the
solution has purely oscillatory asymptotics:
\begin{equation} \label{asymptotics2}
F_{m,n}(\rho;\lambda) = e^{i \lambda \int_{0}^{\rho} \sqrt{
\cosh^{2} x + 1} dx} a_{+}(\rho;\lambda) + e^{-i \lambda
\int_{0}^{\rho} \sqrt{ \cosh^{2} x + 1} dx} a_{-}(\rho;\lambda)
\end{equation}
where $a_{\pm}(\rho;\lambda) \sim \sum_{j=0}^{\infty}
a_{\pm,j}(\rho) \lambda^{-j}$ are determined by the Dirichlet or
Neumann boundary conditions. Moreover, from the
$L^{2}$-normalization condition  $\int_{I}
|\Psi_{m,n}(\rho,\frac{\pi}{2})|^{2} d\rho = 1$  it follows that
$c_{m,n}(\lambda) \sim \lambda^{1/4}.$

 From (\ref{asymptotics1}) and (\ref{asymptotics2}), the Gaussian beams  are roughly  asymptotic to superpositions of $e^{\pm i
k s} e^{- \lambda y^2}$ (cf. \cite{BB}),  where  $s$ denotes
arc-length along the bouncing ball orbit and $y$ denotes the Fermi
normal coordinate. It follows that outside a tube of any given
radius $\epsilon > 0$, the Gaussian beam decays on the order
$O(e^{- \lambda \epsilon^2})$. Hence on any curve $C$ which is
disjoint from the bouncing ball orbit, the restriction of the
Gaussian beam to $C$  saturates the description of a `good'
analytic curve.

\subsection{Remarks}

\noindent{\bf (i)} The goodness requirement (\ref{GOOD}) on any interior curve $C \subset \Omega $ is implied by an exponential growth estimate involving only the Cauchy
data $(\phi_{\lambda}|_C, \partial_{\nu} \phi_{\lambda}|_C)$ along $C$. This is a consequence of the following unique continuation argument.

 Assume that $C$ is a closed curve in the interior of $\Omega$.  Let $U_{C}$ be the domain with boundary $C \cup \partial \Omega$. It follows  from the Sobolev restriction theorem that
 \begin{equation} \label{restriction}
 \| \phi_{\lambda} \|^{2}_{L^{2}(\partial \Omega )}  \leq  C \| \phi_{\lambda} \|_{H^{1/2}(U_{C})}^{2}.
 \end{equation}
 Let $int(C)$ be the interior of the domain bounded by the curve $C$ and take $x \in int(C)$. From the potential layer formula
  (see \ref{GREENSFORMULA}) $\phi_{\lambda}(x) = \int_{C} ( \partial_{\nu(q)} G(x,q;\lambda) \phi_{\lambda}(q) - G(x,q;\lambda) \partial_{\nu_q} \phi_{\lambda}(q) ) d\sigma(q)$ and so, by squaring both sides, using the  bounds $|G(x,q,\lambda)| = {\mathcal O}(1)$ and $|\partial_{\nu_q} G(x,q;\lambda)| ={\mathcal O}( \lambda^{1/2})$ and applying Cauchy Schwartz, one gets
  \begin{equation} \label{cont1}
  \| \phi_{\lambda} \|_{L^{2}(int(C))}^{2} \leq C \lambda ( \|\phi_{\lambda}\|_{L^{2}(C)}^{2} + \| \partial_{\nu} \phi_{\lambda} \|_{L^{2}(C)}^{2}).
  \end{equation}
  By a standard Carleman estimate/unique continuation argument \cite{EZ, Ta, Ta2}:
$$ \| \phi_{\lambda} \|_{H^{1/2}(U_{C})}^{2} \leq e^{C\lambda} \|\phi_\lambda \|_{L^{2}(int(C)) }^{2} \leq  \lambda e^{C\lambda} ( \|\phi_{\lambda}\|_{L^{2}(C)}^{2} + \| \partial_{\nu} \phi_{\lambda} \|_{L^{2}(C)}^{2}),$$
where the last inequality follows from (\ref{cont1}).
Inserting the last bound on the RHS in (\ref{restriction})  yields the comparison estimate relating Cauchy data along $C$ and $\partial \Omega$:
\begin{equation} \label{mainexpbound1}
 \|\phi_{\lambda}\|_{L^{2}(\partial \Omega)}  \leq \, e^{C\lambda}  \, ( \|\phi_{\lambda}\|_{L^{2}(C)} + \| \partial_{\nu} \phi_{\lambda} \|_{L^{2}(C)}).
 \end{equation}
 As an immediate consequence of (\ref{mainexpbound1}) we note that (\ref{GOOD}) follows from the exponential bound
 \begin{equation} \label{mainexpbound2}
 \| \partial_{\nu} \phi_{\lambda}\|_{L^{2}(C)} \leq \, e^{C\lambda}  \| \phi_{\lambda} \|_{L^{2}(C)}
\end{equation}
involving only Cauchy data along $C$.


\vspace{3mm}

A natural question is whether (\ref{mainexpbound2}) is automatically satisfied when $\phi_{\lambda}$
does not vanish identically on $C$? We hope to address this in future
 work.
\medskip

\medskip

\noindent{\bf (ii)} In the case of ellipses, it is elementary to
obtain the analytic continuations of the boundary values of
Neumann eigenfunctions to the complexification of the boundary
ellipse. One sees that  the growth rate of these analytic
continuations are determined  by the angular momenta of the
eigenfunctions, i.e. the eigenvalue of the boundary Laplacian, and
not by the interior eigenvalue. The estimates of this article are
however in terms of the interior eigenvalue. It would be
interesting to strengthen the estimates of this paper to obtain
precise estimates on the growth rate of analytically continuations
of boundary values of eigenfunctions.

\section{\label{AC} Holomorphic extensions of eigenfunctions to Grauert tubes}

It is classical that solutions of the Helmholtz equation on a
Euclidean domain are real analytic in the interior and hence their
restrictions to interior  real analytic  curves admit holomorphic
extensions to the complexification of the curves. A classical
presentation can be found in \cite{G} \S 5.2, and to some extent
we try to conform to its notation; see also
\cite{MN,S,Lew2,Mor,V}. Moreover, the Cauchy data along the
boundary of eigenfunctions satisfying Dirichlet or Neumann
boundary conditions
 admit analytic continuations into a uniform tube in the complexification of the
boundary,   independent of the eigenvalue. We now use integral
representations for  the analytic continuations to  obtain  upper
bounds for the growth rate of the complexified Cauchy data of
 eigenfunctions in a fixed Grauert tube of interior curves or curves on
 the boundary as $\lambda \to \infty$.

 \subsection{Complexification of domains $\Omega$ and analytic
 curves $C \subset \overline{\Omega}$}

 When $(M, g)$ is a real analytic Riemannian manifold without boundary, then there
 exists a complexification $M_{\C}$ of $M$ as an (open) complex
 manifold, and Laplace eigenfunctions extend to a maximal tube in
 $M_{\C}$. This has been studied in  \cite{Bou,GS1,GS2,GLS,Z}
 using the analytic continuation of the geodesic flow and wave
 group.
 The notion of maximal tube depends on the construction of
  a special  plurisubharmonic exhaustion function
 $\rho$ adapted to the metric  $g$ in the sense  that $i \ddbar \rho $ restricts to the totally real
 submanifold $M \subset M_{\C}$ as  $g$. The function
 $\sqrt{\rho}(t)$ is the distance from $t$ to
 $\bar{t}$.

The analogous results for manifolds with boundary have not
apparently been studied before. In this section,  we study the
complexification of the boundary and analytic continuations  of
the Cauchy data of eigenfunctions on the boundary (or on interior
curves) for domains $\Omega
 \subset \R^2$ with the Euclidean metric. The complexification of
 the ambient space is of course $\C^2$ and its Grauert tube function is
 $|\Im t|$. The novel features concern the influence of the
 boundary on analytic extensions of eigenfunctions.

   We  adopt the following
notation from Garabedian \cite{G} and  Millar \cite{M1,M2}: We
denote  points in $\R^2$ and also in $\C^2$ by  $(x, y)$. We
further write
 $z = x + i y, z^* = x - i y$. Note that $z, z^*$ are independent  holomorphic coordinates
 on $\C^2$ and are characteristic coordinates for the Laplacian, in that the
 Laplacian analytically extends to $\frac{\partial^2}{\partial z \partial z^*}$.
  When the boundary is real analytic,
or when we are dealing with a  closed analytic curve $C \subset
\Omega$, we parametrize $C$  by a real analytic parameterization
$Q: {\mathbb S}^{1} \rightarrow {\mathbb R}^2 \simeq \C$. We
complexify $C$ by holomorphically extending the parametrization
 to $Q^{\C}$ on an annulus
\begin{equation} \label{ANNULUS}  A(\epsilon) := \{ t \in {\mathbb C}; e^{-\epsilon} < |t| <
e^{\epsilon} \}, \end{equation}  for $\epsilon >0$ small enough.
Note that $\bar{Q}$ extends holomorphically to $A(\epsilon)$ as
$Q^*$. Throughout the paper, the subscript  $_{\C}$ or superscript
$^{\C}$ denotes the holomorphic continuation of a curve or
function;  sometimes we omit the sub or superscript for notational
simplicity.

 We also use the notation
\begin{equation} \label{equivalent}
 q(t) =  Q(e^{it})
 \end{equation}
 to write the parametrization as a periodic function on $[0, 2
 \pi]$ and often put $q(s) = q_{1}(s) + i q_{2}(s),
  \bar{q}(s) = q_{1}(s) - i q_{2}(s)$. This parametrization
   analytically continues to a periodic function on  $[0,2\pi] +i [-\epsilon, \epsilon]$.
  Both  $s$ and  $t$ can be
either real or complex.

    We denote the boundary data of the eigenfunction by  $$u_{\lambda}(s) = \phi_{\lambda}(q_{1}(s),
q_{2}(s)) \;\;(Neumann); \;\;\; u_{\lambda}(s) =
\partial_{\nu} \phi_{\lambda}(q_{1}(s), q_{2}(s))\;\; (Dirichlet),  $$
and again write $u_{\lambda}^{\C}$ for its holomorphic extension.

Next, we put $r^2 = (\xi -x)^2 + (\eta - y)^2$ so that for $s \in {\mathbb R}$ and $t \in {\mathbb C},$
\begin{equation} \label{rsquared} r^2(s,t) = (q(s) - q(t))
 (\bar{q}(s) - q^*(t)).  \end{equation}
 We denote by $\frac{d}{dn}$ the
 not-necessarily-unit normal derivative in the direction  $i  q'(s)$. Thus,
in terms of the notation $\frac{\partial}{\partial \nu}$ above,
$\frac{d}{dn} = |q'(s)| \frac{\partial}{\partial \nu}. $ One has
$$\frac{d}{ds} \log r = \frac{1}{2} \left[ \frac{q'(s)}{q(s) - q(t)} +
\frac{\overline{q}'(s)}{\overline {q(s)} - q^*(t)} \right],
\;\;\frac{\partial}{\partial n} \log r = \frac{- i }{2} \left[
\frac{q'(s)}{q(s) - q(t)} -
\frac{\overline{q}'(s)}{\overline{q(s)} - q^*(t)} \right]. $$ When
we are using an arc-length parameterization, $\frac{d}{dn} =
\frac{\partial}{\partial \nu}$.

To clarify the notation,  we consider the case of $S^1 = \partial
\Omega$. Then, $q(s) = e^{i s}$,  $t = \theta + i \xi$, $q(\theta
+ i \xi) = e^{i (\theta + i \xi)}, q^* = e^{- i (\theta + i \xi)},
\overline{q^*} = e^{ i (\theta - i \xi)}, $ and
$$\begin{array}{lll}  r^2(s,\theta + i \xi) =  (e^{i (\theta + i \xi)} -
e^{i s}) (e^{- i (\theta + i \xi)} - e^{- i s})  &=& 4 \sin^2
\frac{(\theta - s + i \xi)}{2}. \end{array} $$ Thus, $\log r^2 =
\log (4 \sin^2 \frac{(\theta - s + i \xi)}{2}). $ Clearly,
$\frac{d}{ds} = \frac{d}{d\theta}$, so
$$\frac{d}{ds} \log r^2 =  \left[ \frac{ i e^{is}}{e^{is} - e^{i(\theta + i \xi}} +
\frac{- i e^{-i s}}{e^{-i s} - e^{- i (\theta + i \xi)}} \right],
\;\;\frac{\partial}{\partial \nu} \log r = \frac{- i }{2}  \left[
\frac{ i e^{is}}{e^{is} - e^{i(\theta + i \xi}} - \frac{- i e^{-i
s}}{e^{-i s} - e^{- i (\theta + i \xi)}} \right]. $$

 \subsection{Layer potential representations}

 The representations we use are analytic continuations of layer potential representations
 of solutions. Let $G(\lambda, x, y)$ be any `Green's function' for the Helmholtz equation
 on $\Omega$, i.e. a solution
 of $(- \Delta - \lambda^2) G(\lambda, x, y) = \delta_x(y)$ with $x, y \in \bar{\Omega}$.
We will always use restrictions of the global  Euclidean Green's
functions on $\R^2$.

For any closed curve $C \subset \bar{\Omega}$ bounding a domain
$\mbox{int} (C)$  Green's formula gives
\begin{equation}\label{GREENSFORMULA}  \phi_\lambda (x) = \int_C \left(\partial_{\nu}
G(\lambda, x, q) \phi_\lambda(q) - G(\lambda, x, q)
\partial_{\nu} \phi_\lambda(q) \right) d\sigma(q), \end{equation}
 where $d\sigma$ is
arc-length  and where $\partial_{\nu}$ is the normal derivative by
the interior unit normal. We will analytically continue this
formula for various choices of analytic $C$.

  When $C = \partial \Omega$ and the eigenfunctions satisfy standard
  boundary conditions, the formula simplifies.   In the case of
   Neumann eigenfunctions $\phi_\lambda$ in $\Omega,$ which are
   emphasized here,
\begin{equation} \label{green1}
\phi_\lambda(x)= \int_{\partial \Omega} \frac{\partial}{\partial
\nu_{\tilde{q}} } G(\lambda, x, \tilde{q}) u_\lambda(\tilde{q}) d\sigma (\tilde{q}),\;\; x \in
\Omega^o\;\; (\mbox{Neumann}).
\end{equation}
 Here, and henceforth, we denote the
restriction of $\phi$ to $\partial \Omega$ by $u_j$.
In the Dirichlet case, the corresponding formula is
\begin{equation} \label{green1d}
\phi_\lambda(x)= - \int_{\partial \Omega}  G(\lambda, x,
\tilde{q})\frac{\partial}{\partial \nu_{\tilde{q}}}  \phi_\lambda(\tilde{q}) d\sigma (\tilde{q}),\;\;
(\mbox{Dirichlet})
\end{equation}

 To obtain
concrete representations we need to choose $G$, and we often
choose the real ambient Euclidean Green's function $S$ (in the
notation of \cite{G}, \S 5),
\begin{equation} \label{potential1}
S(\lambda, x, y; \xi, \eta)  = - Y^{(0)}(\lambda r),
\end{equation}
where $r = \sqrt{z z^*}$ is  the distance function and where
\begin{equation} \label{Y0} Y^{(0)} (\lambda r ) = J_0(\lambda r)  \log (kr)  -  \sum_{m =
1}^{\infty} \frac{(-1)^m (1 + 2 \cdots + \frac{1}{m}) (\lambda
r)^{2m}}{4^m (m!)^2} \end{equation} is the Bessel function of
order zero of the second kind.  The Euclidean Green's function has
the form
\begin{equation} \label{GAB} S(\lambda, \xi, \eta;
x, y) = \linebreak A(\lambda, \xi, \eta; x, y) \, \log \frac{1}{r} + B(\lambda, \xi,
\eta; x, y),\end{equation}
with
$$\begin{array}{l} A = J_0(\lambda r) : =  \sum_{k = 0}^{\infty} (-1)^k \frac{(\lambda r)^{2k}}{2^{2k}
(k!)^2}, \\ \\

 \;\; B = -  \sum_{m =
1}^{\infty} \frac{(-1)^m (1 + 2 \cdots + \frac{1}{m}) (\lambda
r)^{2m}}{4^m (m!)^2} + J_0(\lambda r) \log \lambda. \end{array}$$
The coefficient $A$ is known as the Riemann function and $B$ is
the Bessel function of order zero of the first kind.

Sometimes it is more convenient to use the (complex valued)
Euclidean outgoing Green's function  $\Ha_0(k z)$, where $\Ha_0 =
J_0 + i Y_0$ is the Hankel function of order zero. It has the same
form (\ref{GAB}) and only differs by the addition of the even
entire function $J_0$ to the $B$ term.

 By  the `jumps' formulae, the double layer
potential  $\frac{\partial}{\partial \nu_{\tilde{q}}} S(\lambda, x, \tilde{q})$
restricts to the boundary as $\frac{1}{2} \delta_q(\tilde{q})  +
\frac{\partial}{\partial \nu_{\tilde{q}}} S(\lambda, q, \tilde{q})$ (see e.g.
\cite{T}). Hence in the Neumann case the boundary values $u_j$
satisfy,
\begin{equation} \label{green1c} u_\lambda(q)=  2 \int_{\partial \Omega}
\frac{\partial}{\partial \nu_{\tilde{q}}} S(\lambda, q, \tilde{q}) u_\lambda(\tilde{q})
d\sigma(\tilde{q})\;\; (\mbox{Neumann}).
\end{equation}
In the Dirichlet case, one takes the normal derivative of $\phi_j$
at the boundary to get a similar formula for $\partial_{\nu}
\phi_j |_{\partial \Omega}$,  with a sign change on the right
side. We have,
\begin{equation}  \begin{gathered}
\frac{\partial}{\partial \nu_{\tilde{q}}} S(\lambda, q, \tilde{q}) =  - \lambda
Y^{(1)}_1 (\lambda  r) \cos \angle(q -\tilde{q}, \nu_{\tilde{q}})
\end{gathered}\label{Neumann-F}\end{equation}
where
\begin{equation} \label{HANKEL} \begin{array}{l}
\pi Y_1(z) = \frac{-2}{z} + 2 \;J_1(z) \; (\log (z/2) + {\bf \gamma})
 - \sum_{k = 1}^{\infty} (-1)^{k+1} \frac{1}{k! (k - 1)!} \; (z/2)^{2k - 1} [\frac{1}{k} +
2 \sum_{m = 1}^k \frac{1}{m}]. \end{array} \end{equation}  Here, $\gamma$ is Euler's
constant. As is  well-known,  the pole of $Y^1$   is cancelled by the $\cos
\angle(q -\tilde{q}, \nu_{\tilde{q}})$ factor.

If instead we use the Hankel free outgoing Green's function, then
in place of (\ref{Neumann-F}) we get the kernel
\begin{equation} \label{HANKELINT} \begin{array}{lll} N(\lambda, q(s), q(s'))
&=& \partial_{\nu_y} \Ha_{0}(\lambda |q(s) - y|)|_{y = q(s')} \\ &&\\
&=& - \lambda \Ha_{1} (\lambda |q(s) - q(s')|)
\cos \angle(q(s') -q (s),
\nu_{q(s')}).
\end{array} \end{equation}


We now consider the analytic continuations of these formulae.
First we parametrize $C$ by a real analytic parameterization.
Since $J_0$ is even, $ A(\xi, \eta, x, y)$ admits the holomorphic
continuation
  $R(\zeta,
\zeta^*, z, z^*)$. To simplify notation put
$R(q(s), \bar{q}(s), z, z^*) : = R(s; z, z^*)$.

\subsection{Interior curves.} \label{interiorcase} First we consider the simple problem of analytically
continuing the representations (\ref{green1}) and (\ref{green1d}).
We are interested in restrictions to real analytic closed  curves
$C \subset \Omega^o$.

Let $q: [0,2\pi] \rightarrow C$ denote a  real-analytic parametrization of the interior curve $C$. For $\epsilon >0$ sufficiently small, we consider the annulus $A(\epsilon) = \{ t \in \C; e^{-\epsilon} < |t| < e^{\epsilon} \}$ and the corresponding complexification of $C$ given by
$$C_{\mathbb C} = \{ Q^{\C}(e^{i(s + i\tau)}) \in q^{\C}(A(\epsilon)); \,  Q^{\C}(e^{i(s+i\tau)}) = q^{\C}(s + i\tau); \,\, 0 \leq s \leq 2\pi, \, -\epsilon < \tau < \epsilon \},$$
where, we recall that $^{\C}$ denotes holomorphic continuation.
 Since $C$ is assumed to be an interior curve, it follows by
  compactness of $\partial \Omega$ that for $|\Im q^{\C}|$
  sufficiently small, $r^{2}|_{C_{\C} \times \partial \Omega} \neq 0$.
  As a result, one can choose a globally defined holomorphic branch for
   $\log r$ and so, the holomorphic continuation formula for Neumann
    eigenfunctions in this case follows immediately from (\ref{green1}):
\begin{equation} \label{potential2}
\phi_\lambda^{\C}(q^{C}(t))= \int_{\partial \Omega} \frac{\partial}{\partial \nu_{\tilde{q}} }
S(\lambda, q^{\C}(t), \tilde{q}(s)) \, u_\lambda(s) \, d\sigma(s).
\end{equation}

\subsection{The case  $C = \partial \Omega$.} \label{millar1}

We let $q(s)$ denote a real analytic paramaterization of $\partial
\Omega$.  We use the arc-length parameterization so that
$d\sigma(s) = ds$. From (\ref{GAB}), we can write (\ref{green1c})
as

\begin{equation}\label{BASIC}  \begin{array}{lll} u_{\lambda}(t) &=& \frac{1}{2\pi} \int_0^{\ell} (- u_{\lambda}(s) \frac{\partial
  A}{\partial \nu}(s, t)) \log r^2 ds \\ &&  \\
 &&  - \frac{1}{\pi} \int_0^{\ell} u_{\lambda}(s) A(s, t) \frac{1}{r} \frac{\partial
  r}{\partial \nu} ds - \frac{1}{\pi} \int_0^{\ell} (- u_{\lambda}(s) \frac{\partial
  B}{\partial \nu} (s, t) ds. \end{array} \end{equation}
With different choices of $B$ the same formula is valid for the
outgoing Green's function as well.

Since the integral is now over $\partial \Omega$,   the
logarithmic factor in $S$ gives rise to a multi-valued integrand,
and it is not obvious that the representation can be
holomorphically extended. We first concentrate on the case where
$C = \partial \Omega$ when $\partial \Omega$ is real analytic, and
later consider the case  where $C$ is an analytic arc of $\partial
\Omega$ when it is piecewise real analytic.

So assume at first that  $C = \partial \Omega$ is real analytic,
and as above let $u_{\lambda}(s) = \phi_{\lambda}(q(s))$
 be the boundary traces of the Neumann eigenfunctions. Our goal is to analytically
  continue the representation (\ref{BASIC}).

Millar's formula for the holomorphic continuation of the Cauchy
data  is as follows; let
\begin{equation} \label{PHI} \Phi(t; z, z^*) = \int_0^t u_{\lambda}(s) \frac{\partial}{\partial n}
R(s, z, z^*) ds. \end{equation}

\begin{prop} \cite{M1,M2} \label{MILLAR} The boundary data $u = u_{\lambda}$ of the eigenfunctions
of the Neumann problem admit the following holomorphic extension
to a uniform tube around $\partial \Omega$ in its complexification
$(\partial \Omega)_{\C}$: (for $\Im t
> 0, < 0$)
\begin{equation} \label{MILLARFORM} \begin{array}{lll}  u_{\lambda}^{\C}(t)
  &=& \pm i \Phi(t, q(t), q^*(t)) + \int_0^{\ell} \left[ \Phi(s;
q(t), q^*(t)) + i u_{\lambda} (s) R(s, q(t), q^*(t)) \right]
\frac{q'(s)}{q(s) - q(t)} ds \\ && \\
&& +  \int_0^{\ell} \left[ \Phi(s; q(t), \bar{q}(t)) - i u_{\lambda} (s)
R(s, q(t), q^*(t)) \right] \frac{\bar{q}'(s)}{\bar{q}(s) -
q^*(t)}
ds\\ && \\
&& - 2   \int_0^{\ell} u_{\lambda}(s) \frac{\partial B}{\partial n}(s; q(t),
q^* (t))  ds. \end{array} \end{equation}
\end{prop}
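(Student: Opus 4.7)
The plan is to transform the real-parameter identity (\ref{BASIC}) into a form that extends holomorphically in $t$, using integration by parts to tame the multi-valued $\log r^2$ and then extracting a Plemelj--Sokhotski jump when $t$ is pushed off the real circle. First I would factor
\[r^2(s,t) = (q(s)-q(t))(\bar q(s)-q^*(t)),\]
so that $\partial_s \log r^2 = q'(s)/(q(s)-q(t)) + \bar q'(s)/(\bar q(s)-q^*(t))$. Integrating the first term of (\ref{BASIC}) by parts against the antiderivative $\partial_s\Phi(s;z,z^*) = u_\lambda(s)\,\partial_n R(s;z,z^*)$, evaluated at $(z,z^*)=(q(t),q^*(t))$, produces a boundary piece $[\Phi\log r^2]_0^\ell$ together with an integral of $\Phi$ paired against these two Cauchy-type kernels.

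Next I would combine this with the middle integral of (\ref{BASIC}). Using the identity $\partial_n\log r = -\tfrac{i}{2}[q'(s)/(q(s)-q(t)) - \bar q'(s)/(\bar q(s)-q^*(t))]$ noted before (\ref{PHI}), the factor $A(s,t)\cdot r^{-1}\partial_\nu r$ is itself a linear combination of the same two Cauchy kernels with coefficient $\tfrac{1}{2}u_\lambda(s)R(s;q(t),q^*(t))$. Adding the contribution of Step~1 to this term yields exactly the brackets
\[\Phi(s;q(t),q^*(t)) + iu_\lambda(s)R(s;q(t),q^*(t)), \qquad \Phi(s;q(t),q^*(t)) - iu_\lambda(s)R(s;q(t),q^*(t))\]
appearing in (\ref{MILLARFORM}), while the third term of (\ref{BASIC}) delivers the $\partial B/\partial n$-integral in (\ref{MILLARFORM}) essentially unchanged (a harmless absorption of the $J_0\log\lambda$ piece into $B$).

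The heart of the argument is the Plemelj jump producing the $\pm i\Phi(t;q(t),q^*(t))$ prefactor. For real $t_0\in[0,\ell]$ the identity (\ref{BASIC}) holds as a principal value; when $t$ is moved into $\{\Im t>0\}$, the unique pole of $q'(s)/(q(s)-q(t))$ in the relevant strip lies at $s=t$ in the upper half-plane, so the resulting integral on $A(\epsilon)$ is holomorphic in $t$, and its boundary value as $\Im t\to 0^+$ differs from the real PV by $\pi i$ times the residue at $s=t_0$, namely $\pi i\,[\Phi(t_0;q(t_0),q^*(t_0)) + iu_\lambda(t_0) R(t_0;q(t_0),q^*(t_0))]$. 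The conjugate kernel $\bar q'(s)/(\bar q(s)-q^*(t))$ has its pole at $s=\bar t$ on the opposite side of the real axis and contributes with the opposite sign; because $iu_\lambda R$ appears with opposite signs in the two brackets above, the two half-residues reinforce in the $\Phi$ coefficient and cancel in the $R$ coefficient, leaving precisely the single residue $\pm i\Phi(t;q(t),q^*(t))$ with sign determined by $\mathrm{sgn}\,\Im t$. The boundary term $[\Phi\log r^2]_0^\ell$ from the integration by parts is handled by the periodicity $q(0)=q(\ell)$ after fixing consistent branches of the two logarithms on $A(\epsilon)$, and the leftover winding contributions are absorbed into the displayed integrals.

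The main obstacle I anticipate is the branch bookkeeping and sign verification in the Plemelj step: selecting single-valued branches of $\log(q(s)-q^{\C}(t))$ and $\log(\bar q(s)-q^*(t))$ simultaneously on a uniform tube $A(\epsilon)$ independent of $\lambda$, and checking that the two half-residues genuinely reinforce (producing one $\pm i\Phi$) rather than cancel or double---this is where the asymmetric entry of $q$ vs.\ $q^*$ in the Cauchy kernels, combined with the $\mp i$ in the two brackets, must be tracked with care.
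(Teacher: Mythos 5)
Your proposal follows essentially the same skeleton as the paper's proof — factor $r^2$, integrate the $\log r^2$ term by parts against $\Phi$, and analyze the boundary value as $\Im t\to 0$ — but you replace the paper's explicit branch-tracking of $L(s,t)$ (the carefully chosen single-valued branch of $\log r^2$) with a Plemelj/Sokhotski argument. This is a legitimate alternative in spirit. However, the Plemelj step contains a concrete error, and there is a genuine gap in the treatment of the boundary terms.

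\textbf{The pole location is wrong, and the sign reasoning is internally inconsistent.} You assert that the pole of $\bar q'(s)/(\bar q(s)-q^*(t))$ lies at $s=\bar t$, on the opposite side of the real axis from the pole $s=t$ of $q'(s)/(q(s)-q(t))$, and that it therefore "contributes with the opposite sign." This is incorrect. In the $s$-plane, both kernels have their pole at $s=t$. To see this, note that the analytic continuation in $s$ of the real-analytic function $s\mapsto\overline{q(s)}$ is precisely $q^*(s)$, so the equation $\bar q(s)=q^*(t)$ is $q^*(s)=q^*(t)$, giving $s=t$. (Equivalently, for $s$ near $t_0\in\R$ one computes $\bar q(s)-q^*(t)\approx \bar q'(t_0)(s-t)$, the same local form $1/(s-t)$ as the first kernel.) Hence both Cauchy integrals acquire a $+\pi i F(t_0)$ half-residue as $\Im t\to 0^+$: the contributions genuinely reinforce on the $\Phi$-coefficient and cancel on the $iu_\lambda R$-coefficient, which is what you claimed — but your stated reasoning would yield the opposite. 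If the half-residues really had opposite signs while $\Phi$ enters both brackets with the same sign, they would cancel on $\Phi$, not reinforce. Your conclusion is salvaged only because two errors offset each other; the argument as written does not establish it.

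\textbf{The boundary terms and the non-zero-mean part of $f$ are a genuine gap.} You dispose of $[\Phi\log r^2]_0^\ell$ by "periodicity $q(0)=q(\ell)$... and the leftover winding contributions are absorbed into the displayed integrals." This does not work as stated. Here $f=u_\lambda\,\partial_\nu A$ need not have mean zero, so its primitive $F=\Phi$ does not satisfy $F(\ell)=F(0)=0$, and the integration-by-parts boundary terms survive; moreover $L(\ell,t)$ and $L(0,t)$ differ by the winding of the branch. The paper deals with this by decomposing $f=(f-\langle f\rangle)+\langle f\rangle$: the zero-mean part is handled by integration by parts exactly as you propose, while the constant part requires the nontrivial identity $g(t):=\int_0^{2\pi}L(s,t)\,ds+2\pi i\int_0^t ds=0$, proved by a direct computation of $\tfrac{d}{dt}\int_0^{2\pi}\log(2-2\cos(s-t))\,ds=-2\pi i$. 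Your proposal simply does not address the constant mode, and this is precisely where the branch bookkeeping you flagged as a risk actually bites. Finally, the Plemelj boundary-value matching only shows the formula's one-sided limits agree with (\ref{BASIC}) on the real circle; to conclude that (\ref{MILLARFORM}) defines a single holomorphic function across the circle you still need the sectionally-holomorphic gluing step (the paper's Lemma \ref{Muk}), which you invoke only implicitly.
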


\begin{proof} Since we depend crucially on this Proposition, and
since  it does not appear to be well-known, or to be proved in
detail in \cite{M1,M2,V},  we supply the details of the proof.

We will analytically continue the formula (\ref{BASIC}).
 Although $u$ is real analytic on $\partial \Omega$ and hence
admits an analytic continuation to a small complex `tube'
$(\partial \Omega)_{\C}$, it is not clear that the representation
(\ref{BASIC}) can be extended analytically due to singularities of
the integrand. Moreover it is not clear that the right side of
(\ref{MILLARFORM}) is in fact complex analytic. The main task in
the proof is to clarify these points.

We begin by showing that the last two terms of (\ref{BASIC})
analytically continue in a straightforward way.

\begin{lem} \label{easy}  The integrals (i)\; $\frac{1}{\pi} \int_0^{\ell}  u_{\lambda}(s) A(s, t) \frac{1}{r} \frac{\partial
  r}{\partial \nu} (s, t) ds$, resp. (ii)\; $ \frac{1}{\pi} \int_0^{\ell}  u_{\lambda}(s) \frac{\partial
  B}{\partial \nu} (s, t) ds$,  are real analytic on the parameter interval $S^1$
  parametrizing $\partial \Omega$ and are holomorphically extended to an annulus by the formulae
$$ (i)\;\;  \int_0^{\ell}  i u_{\lambda}(s) R(s, q(t), q^{*}(t)) \left(
\frac{q'(s)}{q(s) - q(t)} -  \frac{\bar{q}'(s)}{\bar{q}(s) -
q^*(t)} \right)
ds,$$  resp. $$(ii) \;\; - 2   \int_0^{\ell} u_{\lambda}(s) \frac{\partial B}{\partial \nu}(s;
q(t), q^{*}(t))  ds.$$

  \end{lem}

\begin{proof}

Any derivative of $\log r^2$ is unambiguously defined and we have
$$\frac{1}{r} \frac{\partial r}{\partial n} = \frac{\partial \log r}{\partial n} = \frac{1}{2i}
[\frac{q'(s)}{q(s) - q(t)} - \frac{\bar{q}'(s)}{\bar{q}(s) - q^*(t)}]. $$
In the real domain, $q^*(t) = \bar{q}(t)$, so
$$\frac{1}{r} \frac{\partial r}{\partial n}  = \Im \frac{q'(s)}{q(s) - q(t)}. $$
We recall that $\frac{\partial r}{\partial \nu} = |q'(s)|^{-1} \frac{\partial r}{\partial n}.$
In terms of the real parametrization $q(\phi)$,  $$\frac{\partial r}{\partial \nu} =  \cos
\angle(q(\phi_2) -q (\phi_1), \nu_{q(\phi_2)}) $$ vanishes to order one
on the diagonal in the real domain so that $\frac{1}{r} \frac{\partial r}{\partial \nu}$ is
real and continuous. In complex notation, the same statement follows from the fact that
$$\lim_{t \to s} \frac{q(s) - q(t)}{s - t} =  q'(s) \implies \frac{q'(s)}{q(s) - q(t)} = \frac{1}{s - t}
 + O(1), \;\; (s \to t), $$
 where $\frac{1}{s - t}$ is real when $s, t \in \R$. Hence,
  $\Im \frac{q'(s)}{q(s) - q(t)}$ is continuous for $s, t \in [0, \ell]$ and since $q(s), q(t)$
  are real analytic, the map $s \to [\frac{q'(s)}{q(s) - q(t)} - \frac{\bar{q}'(s)}{\bar{q}(s) - q^*(t)}]$
  is a continuous map from $s \in [0, \ell]$ to the space of holomorphic functions of $t$.

Since $A = J_0(k r)$ is an analytic function of $r^2$,   $A
 \frac{\partial \log r}{\partial \nu}$
has the form $$F(r^2) \;  \frac{1}{2i}
[\frac{q'(s)}{q(s) - q(t)} - \frac{\bar{q}'(s)}{\bar{q}(s) - q^*(t)}], $$
for an analytic function $F$. Clearly, $F(r^2(s, t))$ is also a continuous map from
$s \in [0, \ell]$ to the space of holomorphic functions of $t$. Hence, so is the product
and therefore so is the integral over $s \in [0, \ell]$ of the product.

 Similarly for case (ii). In this case,   $B$ is an entire function $H(r^2)$
of $r^2$ which is of the form $r^2 h(r^2)$ for another entire $h$.  Hence, $$\frac{\partial
B}{\partial \nu} = \;\; r^2 H'(r^2) \times \frac{1}{2i}
[\frac{q'(s)}{q(s) - q(t)} - \frac{\bar{q}'(s)}{\bar{q}(s) - q^*(t)}]. $$ So the integral (ii) also admits
an analytic continuation.

\end{proof}

Thus, the difficulty in analytic continuation of the representation is entirely with the integral
$\int_0^{\ell} (- u(s) \frac{\partial
  A}{\partial \nu}(s, t)) \log r^2 ds$. Due to the logarithm, the analytic continuation of the
  integrand is multi-valued in any neighborhood of $\partial \Omega$.
  Nevertheless,
  the integral admits a single-valued analytic continuation there.

  \begin{lem}  The integral $\int_0^{\ell} (- u(s) \frac{\partial
  A}{\partial \nu}(s, t)) \log r^2 ds$ extends to a holomorphic function of $t$ in
  a neighborhood of $\partial \Omega$ in $(\partial \Omega)_{\C}$ given by
  $$\pm i \Phi(t, q(t), q^*(t)) + \int_0^{\ell} \left[ \Phi(s;
q(t), q^*(t))\right] \frac{q'(s)}{q(s) - q(t)} ds
 +  \int_0^{\ell} \left[ \Phi(s; q(t), q^*(t))  \right] \frac{\bar{q}'(s)}{\bar{q}(s) -
q^*(t)} ds,$$ where, $\pm$ corresponds to  $\Im t >0,<0. $
\end{lem}

\begin{proof}

   We first observe that
\begin{equation} \label{ANU} \frac{\partial A}{\partial \nu} = J_0'(r) \frac{\partial
r}{\partial \nu} = J_1(r) \frac{\partial r}{\partial \nu}.
\end{equation}
Now $J_1$ is odd in $r$ so
(\ref{ANU}) has the form
\begin{equation} \label{DADNU} F(r^2) \;\; r \frac{\partial r}{\partial \nu}
 =F(r^2) \;\; r^2\; \frac{\partial \log r}{\partial
\nu} = F(r^2)  \; r^2\; \left(\frac{1}{2i}
[\frac{q'(s)}{q(s) - q(t)} - \frac{\bar{q}'(s)}{\bar{q}(s) - q^*(t)}]\right),
\end{equation}  where $F$ is a holomorphic function.
From (\ref{DADNU}) it follows that $\frac{\partial A}{\partial \nu}(t, s) $
is a smoothly varying family of holomorphic functions of $t$ in a sufficiently small annulus.

Thus, our problem is a special case of the general problem of
analytically continuing the integral $\int_0^{\ell} f(s) \log
r^2(s, t) ds$ where $f$ is real analytic and where $r^2(s, t)$ is
given by (\ref{rsquared}). In our case, $f(s)$ also depends
holomorphically on $t$ but this does not affect the analytic
continuation issue.

To define the analytic continuation, we specify a branch  $L(s,
t)$  of the multi-valued analytic continuation of $\log r^2(s,t)$.
We first  slit the complex parameter annulus through the vertical
segment through  $0$  to obtain the complex $t$ parameter strip $I
= [0, \ell] + i (- \epsilon, \epsilon)$. Our integrals only
involve  pairs $(s, t) \in [0, \ell] \times I$. We then remove the
the set $0 \leq s < \Re t$. For fixed $s$, these cuts  disconnect
the $t$-annulus into two strips. On one strip, $0 \leq \Re t \leq
s$ in the parameter interval, while $s \leq \Re t \leq \ell$ in
the other. We then further slit these  strips along the real
segment $\Im t = 0$ of $I$, to obtain four strips or `quadrants'.
In  the right `half-plane' where  $s
> \Re t$, we  define $\Im L (s,t) $ so that it is continuous in
the right `half-plane'
  and  tends $0$ as $\Im t \to 0$
from either top or bottom.   In the slit left `half-plane',  $\{s
< \Re t\} \backslash [0, \Re t]$ we define  $L(s, t)$ by
continuation from the right half plane. It then tends to
 $\mp 2\pi$ as $\Im t \to 0$ from  above, resp. below the cut along
 the real axis $s < \Re t$.

  To illustrate, we consider the basic case of the circle,
 where $q(t) = e^{i t}$ and where we are defining
 $\arg \left( (e^{is} - e^{i t})(e^{-is } - e^{- it}) \right)$.  We fix $\Re t =
 t_0$ and consider the map $(s, \tau) \to (e^{is} - e^{i t})(e^{-is } - e^{-
 it})$
 where $t = t_0 + i \tau$. In the `first quadrant' $s > t_0, \Im t
 > 0$, this map is anti-holmorophic and takes the real axis $\Im t
 = 0$ to the positive real axis and the `imaginary axis',  $s =
 t_0$ and
 $\Im t > 0$, to the negative real axis. Since the map is anti-holomorphic,
 the image of a counter-clockwise path in the first quadrant from the real to imaginary
 axis is a clock-wise path from the positive real axis to the
 negative real axis, so the arg equals $- \pi$ on the imaginary
 axis. As the path in the domain moves counter-clockwise in the second quadrant to $s <
 t_0, \Im t = 0$ the image path moves to argument $- 2\pi$.
 Similarly, the
continuation  in the fourth and third quadrants leads to a value
 of $2 \pi$ on the axis $s < t_0$.

 The following claim is the key one to obtain a single valued analytic
 continuation  (cf.  Millar \cite{M2}).

\begin{claim}\label{CLAIM} If $f$ admits an analytic continuation to a
neighborhood of $\partial \Omega$, then  the integral
$\int_0^{\ell} f(s) \log r^2(s, t) ds$ admits an analytic
continuation to a neighborhood of $\partial \Omega$ in $(\partial
\Omega)_{\C}$ by
\begin{equation} \label{mainintegral}
\int_0^{\ell} f(s) \log r^2(s, t) ds \to  \int_0^{\ell} f(s) L (s, t) ds \pm 2 \pi i \int_0^t f(s) ds,
\end{equation}
where the path from $0$ to $t$ is defined in the integral is the
same as the path used to analytically continue $\log r^2(s, t)$,
and where the $+$ sign is taken for $\Im t
> 0$ and the $-$ sign when $\Im t < 0$.
\end{claim}
First, we check the periodicity of the right side in (\ref{mainintegral}). This follows from the fact that
$$\int_0^{\ell} f(s) L(s,\ell + i\Im t) ds  \pm  2\pi i \int_{0}^{\ell+ i \Im t} f(s) ds = \int_{0}^{\ell}  L(s, i \Im t) f(s) ds$$
$$ \pm 2\pi i \int_{0}^{\ell +i\Im t} f(s) ds \mp 2\pi i \int_{0}^{\ell} f(s) ds $$
 $$ = \int_{0}^{\ell}  L(s, i \Im t) f(s) ds \pm 2\pi i
\int_{0}^{i \Im t} f(s)ds,  $$

 where, the last   line  follows from the Cauchy formula and the
 fact that $f(t + \ell) = f(t).$

To prove the claim it suffices to show that the right side in (\ref{mainintegral}) is

\begin{itemize}

\item (i)  Holomorphic in the upper annulus $\Im t > 0$;

\item (ii) Holomorphic in the lower annulus $\Im t < 0$;

\item (iii) Continuous in the whole annulus, and  restricts to
$\int_0^{\ell} f(s) \log r^2(s, t) ds$ for real $t$.

\end{itemize}

 Let us first
recall why this is sufficient:
  \begin{lem} \label{Muk} (see e.g. \cite{Mu}) Let $\Omega \subset
 \C$ be a domain, and suppose
 that $C \subset \Omega$ is a closed curve separating $\Omega$ into two domains $D^+,
D^- $ with common boundary $C$. Suppose that $F^+, F^-$  are
holomorphic on $D^+$, resp. $D^-$ and that $F^+ = F^-$ on $C$.
Then the function $F$ defined by $F = F^+ |_{D^+}, F^- |_{D^-}$
and $F^{\pm}$ on $C$  is holomoprhic on all of $\Omega$. In other
words, a sectionally holomorphic function which is continuous is
holomorphic.
 \end{lem}

Let us prove (iii) first, since it explains the second term on the
right side of Claim \ref{CLAIM}.  With no loss of generality,
suppose that $\Im t \to 0^+$ with $t \to t_0$. Then
$$ \int_0^{\ell} f(s) L (s, t) ds + 2 \pi i \int_0^t f(s) ds
\to
 \int_0^{\ell} f(s) L  (s, t_0) ds + 2 \pi i \int_0^{t_0} f(s)
 ds, $$ and we must show that
 $$\int_0^{\ell} f(s) L (s, t_0) ds + 2 \pi i \int_0^{t_0} f(s)
 ds = \int_0^{\ell} f(s) \log r^2(s, t_0) ds. $$
 Here, $\arg r^2(t,s) = 0$ while $\Im L (s, t)$ equals zero for $s
 \geq t$ and equals $-2 \pi $ for $s \leq t$. Hence, the imaginary part
 of the left side cancels and we obtain the right side.

Now let us prove (i)-(ii). Since the proofs are essentially the
same we only prove (i).

We first note that all  branches of analytic continuation of $\log
r^2(s,t)$ differ by constants in $2\pi i \Z$. Hence, if
 the period $\langle f \rangle :=
\frac{1}{\ell}\int_0^{\ell} f(s) ds$ of $f$ vanishes, then all
choices of branch of $\log r^2$ give the same
 value of the integral $\int_0^{\ell} f(s) \log r^2(s, t) ds$.
Similarly, the integral  $\int_0^t f(s) ds$  is only multi-valued
due to the period  of $f$. Hence, when the $\langle f \rangle =
0$, both terms on the right side of the claim are well-defined
independently of any choice of integration path or branch of $\log
r^2$. Since we can write $f = (f - \langle f \rangle) + \langle f
\rangle $, we only  need to show:
\begin{enumerate}

\item $ \int_0^{\ell} f(s) L (s, t) ds \pm  2 \pi i \int_0^{t}
f(s)
 ds $ is  holomorphic for $\Im t > 0$ when
 $\langle f \rangle = 0$;

\item $ \int_0^{\ell}  L (s, t) ds \pm 2 \pi i \int_0^t ds$ is
single-valued holmorphic function for $\Im t > 0$.

\end{enumerate}

To prove (1), we assume $\langle f \rangle = 0$ and  let $F(t) =
\int_0^t f(s) ds$ be the (well-defined) primitive of $F$ in the
annulus. We then integrate by parts in the first integral to
obtain
\begin{equation} \label{DERIV}\begin{array}{l}  \int_0^{\ell} F'(s) L (s, t) ds \\ \\ =   F(\ell)
L (\ell, t) - F(0) L (0, t) - \int_0^{\ell} F(s) \frac{q'(s)}{q(s)
- q(t)} ds - \int_0^{\ell} F(s)
\frac{\overline{q}'(s)}{\overline{q}(s) - q^*(t)} ds  \\  \\=
 - \int_0^{\ell} F(s)
\frac{q'(s)}{q(s) - q(t)} ds - \int_0^{\ell} F(s)
\frac{\overline{q}'(s)}{\overline{q}(s) - q^*(t)} ds
\end{array}
\end{equation}
Here, we use that $F(\ell ) = F(0)$ since it is a single-valued
holomorphic function on the annulus and that $F(0) = 0$ by
definition.
 It is clear that the
right side of (\ref{DERIV})  is holomorphic in  $\Im t
> 0 $ since  poles occur only when $\Im t = 0.$  Since $F(t)$ is single valued and
holomorphic, this  proves (1).

To prove (2) we write
\begin{equation} \label{ii} \begin{array}{lll} \int_0^{\ell}  L (s, t) ds \pm 2 \pi i \int_0^t ds
&  = &
  \int_0^{2 \pi} \log \frac{Q(e^{is})
- Q(e^{i t})}{e^{is} - e^{i t}} \frac{Q^*(e^{is}) -
Q^*(e^{it})}{e^{-is } - e^{- it}} ds \\ && \\ && +  \int_0^{2 \pi}
\log \left( (e^{is} - e^{i t})(e^{-is } - e^{- it}) \right) ds \pm
2 \pi i \int_0^t ds, \end{array} \end{equation} We observe that
the first term is holomorphic for   $\Im t
> 0$ since the $\arg$ of both numerator and denominator are continued so
that each $\arg$  tends to $  - 2 \pi$ as $\Im t \to 0$ for $s \in
[0, \Re t]$  and so that each $\arg$ tends to zero for $s \in [\Re
t, \ell]$. Hence, the $\arg$ of the ratio tends to zero as $\Im t
\to 0$ in both integrals. Since the $arg$ is only ambiguous up to
a constant in $2 \pi i \Z$, it follows that the $\arg$ of the
ratio is well defined and single valued and the integrand is
well-defined as a single-valued holomorphic function for $\Im t >
0$. Therefore, to prove (2) it suffices to show that for $\Im t
\geq 0$
\begin{equation} \label{main}
   g(t) := \int_0^{2 \pi}
L(s, t) ds + 2 \pi i \int_0^t ds = 0,
\end{equation}
where $L(s, t) = \log \left( (e^{is} - e^{i t})(e^{-is } - e^{-
it}) \right)$ with our choice above of the branch cut at $s = \Re
t$. Note that $g$ is an analytic continuation of the integral
$\int_0^{2 \pi} \log |e^{is} - e^{i  t}|^2 ds = 0$ for real $t$,
so analyticity of $g$ is equivalent to $g = 0$.

This reduces the analysis to the integral $$ \int_0^{2 \pi} \log
\left( (e^{is} - e^{i t})(e^{-is } - e^{- it}) \right) ds =
 \int_0^{2 \pi} \log
\left( 2 - 2 \cos (s - t) \right) ds, $$ where as above the
logarithm is defined by breaking  up the integral into $
\int_0^{\Re t} +
 \int_{\Re t}^{\ell}$ and defining the $\arg$ by the method above.
 Note that formally the integral is constant in $t$ by a change of
 variables but that such a change of variables is not consistent
 with the definition of the logarithm. However,  the
 integrand is a function of $s - t$ and so, $\frac{d}{dt} \log   \left( 2 - 2 \cos (s - t) \right) =  \frac{d}{d(t-s)} \log \left( 2 - 2 \cos (t-s) \right)$ is well-defined independent of the branch of $\log$ used (since these differ by integer
  multiples of $2\pi i$).  Hence,
 $$\begin{array}{lll} \frac{d}{dt}  \int_0^{2 \pi} \log
\left( 2 - 2 \cos (s - t) \right) ds & = &
   -  \int_0^{2 \pi} \frac{d}{ds }\log \left( 2 - 2 \cos (s - t)
\right) ds \\ && \\
& = &  -  \log \left( 2 - 2 \cos (s - t) \right) |_0^{2\pi} \\
&& \\
& = & - 2 \pi i,
\end{array}
$$
by definition of the logarithm. It follows from (\ref{main})  that
$$ \frac{d}{dt} g(t) =  \frac{d}{dt} \int_0^{2 \pi} \log
\left(2 - 2 \cos (s - t) \right) ds + \frac{d}{dt} ( 2\pi i t)  =
- 2\pi i + 2\pi i = 0.$$ Hence $g$ is constant and as noted above
it equals $0$ for real $t$.

\end{proof}

 This completes the proof of the Claim and hence of the
 Proposition.

\end{proof}
\begin{rem} By integrating by parts directly in the integral $Lf(t) := \int_{0}^{\ell} \log r^{2}(s,t) f(s) ds$ for $t$ {\em real} and using that $\int_{0}^{2\pi} \log |e^{is} - e^{it}|^{2} ds =0,$ one gets the formula
\begin{equation} \label{real formula}
Lf(t) =  - \int_{0}^{\ell} \left( \frac{ q'(s)}{q(s)-q(t)} + \frac{\bar{q}'(s)}{ \bar{q}(s) - \bar{q}(t)}  \right)   \cdot  ( F(s) - F(t) ) \, ds + \langle f \rangle \int_{0}^{2\pi} \log \frac{|Q(e^{is})- Q(e^{it})|^{2}}{ |e^{is} - e^{it}|^{2}} ds,
\end{equation}
where, $F(t):= \int_{0}^{t}(f-\langle f \rangle) ds.$  It follows from (\ref{real formula}) that for $t \in [0,\ell ] + i[-\epsilon, \epsilon]$ there is an alternative formula for the analytic continuation of $Lf$ which is given by
\begin{eqnarray} \label{p5}
(Lf)^{\C}(t) = - \int_{0}^{\ell } \left( \frac{ q'(s)}{q(s)-q(t)} + \frac{ \bar{q}'(s)}{ \bar{q}(s) - q^{*}(t)}  \right)   \cdot  ( F(s) - F(t) ) \, ds \\ \nonumber
 + \langle f \rangle \int_{0}^{2\pi} \log \frac{ [Q(e^{is})- Q(e^{it})] [Q^{*}(e^{is})- Q^{*}(e^{it})] }{ [e^{is} - e^{it}][ e^{-is}- e^{-it}]} ds.
\end{eqnarray}
We note that the $\log$ in the second term on the RHS of (\ref{p5}) is defined unambiguously (independent of branch) since as $\Im t \rightarrow 0$ and $s \rightarrow \Re t$ from either side, we have that $\arg  [Q(e^{is})- Q(e^{it})] - \arg [e^{is} - e^{it}] \rightarrow 0$ and so the arguments cancel. The same thing is true for the ratio involving $Q^{*}$.
\end{rem}

\begin{rem} It should be noted that the proof  only
makes use of the fact that $u_{\lambda}$ admits a single-valued
analytic continuation to $(\partial \Omega)_{\C}$.  The right side
of (\ref{BASIC}) defines an operator $N u$ on $C(\partial
\Omega)$, and the proof shows that although $N_{\C}(\lambda,
\zeta, q)$ is multi-valued, its integral against $u$ admits a
single-valued holomorphic extension as long as $u$ admits one. Let
us check the argument in the case of the unit disc, where $N$ is a
convolution operator with kernel
$$N (\lambda, \theta, \phi)   = \sum_{n \in \Z} \lambda \Ha_n
(\lambda) J_n'(\lambda)  e^{i n (\theta - \phi)}.$$ The Fourier
coefficients $\lambda \Ha_n (\lambda) J_n'(\lambda) $ have
only a slow decrease reflecting the singularity of $\log |\sin^2
(\theta - \phi)|$, so the kernel does not admit a holomorphic
extension $e^{i n \theta} \to \zeta^n$ in a pointwise sense.
However,  its integral $N u$  against a real analytic function $u
\sim \sum_n a_n e^{i n \phi}$ admits the  holomorphic extension
$\sum_{n \in \Z} \lambda \Ha_n (\lambda) J_n'(\lambda) a_n
\zeta^n$ to the maximal domain to which $u$ itself holomorphically
extends.

\end{rem}

\subsection{  \bf The case $C\cap \partial \Omega \neq \emptyset$ with
$C \neq \partial \Omega$} \label{millar2}

In this case, we cannot use the boundary conditions to simplify
the integral
    \begin{equation} \label{generaljumpeqn}
    \phi_{\lambda}(q) = 2 \left( \int_{C} \frac{\partial}{\partial \nu_{\tilde{q}}} S (q, \tilde{q};\lambda) \phi_{\lambda}(q) d \sigma(\tilde{q})
    - \int_{C} S(q,\tilde{q};\lambda) \frac{\partial}{\partial \nu_{\tilde{q}}} \phi_{\lambda}(\tilde{q}) \, d\sigma(\tilde{q})\right).
    \end{equation}
The first term is handled exactly as in the case $C = \partial
\Omega$, while the second term (the single layer potential term)
is new.

 To continue the second
      term we write it in the form
    $$ - \frac{1}{2} \int_{C} A(t,s;\lambda)  \, \log r^{2}(s,t) \,
     \partial_{\nu} \phi_{\lambda}(s) d\sigma(s)  - \int_{C} B(t,s;\lambda) \,
     \partial_{\nu}\phi_{\lambda}(s) \,  d\sigma(s).$$
    Since $B = F(r^{2})$ where $F$ is entire, the second term above
    has a straightforward analytic continuation. The first term is
    another case of
     the integrals discussed in the previous section, and its holomorphic continuation is:
    $$- \int_{0}^{t} R(q(t),q^{*}(t);s) L(s,t)
    \partial_{\nu} \phi_{\lambda}(s) \, d\sigma(s) \mp 2\pi i \int_{0}^{t}
     R(q(t),q^{*}(t);s) \partial_{\nu}(s) \, d\sigma(s).$$

\subsection{The case where $C$ is piecewise real analytic.}

We now consider  the case where $C$ is piecewise real-analytic.
This has previously been discussed in \cite{M3}.

By a piecewise analytic embedded closed curve $C$ of length $L(C)$
we mean a curve of the form  $C = \bigcup_{j = 1}^m C_j$ where
\begin{itemize}

\item  $C_j \subset \R^2$ are the maximal real analytic components
of $C$, enumerated in counterclockwise order so that $C_j$
intersects only $C_{j - 1}$ and $C_{j + 1}$.

\item The $C_j$ are  parameterized by $m$ real analytic functions
$q_j(t_j):  [0, \ell_j] \to C_j$ on $m$ parametrizing intervals
(where $\ell_j = L(C_j)$ is the length of $C_j$. We assume $C_j
\cap C_{j - 1} = \{q_j(0) = q_{j - 1}(\ell_j)\}$ when $m \geq 2$.

\end{itemize}

We denote the Cauchy data of the eigenfunction $\phi_{\lambda}$ on
the boundary component $C_j$ by $u^j_{\lambda}$. Our aim is to
analytically continue $u^j_{\lambda}$ to $\bigcup_{j = 1}^m C_{j,
\C}$ where $C_{j, \C}$ is a complexification of the interior of
$C_j$. Thus, as we define it, $C_{\C}$ is pinched at the corner
points and the analytic continuation of the boundary data of
$\phi_{\lambda}$ is somewhat simpler than in the fully analytic
case in that we are analytically continuing to a smaller kind of
neighborhood of $C$.

Millar's formula for the analytic extension of $u^j_{\lambda}$ to
$C_j^{\C}$ in the Neumann case is given by:

\begin{prop} \cite{M3} \label{MILLAR3} The boundary data $u^j_{\lambda}$ of the eigenfunctions
of the Neumann problem admit the following holomorphic extension
to a uniform tube around the interior $C_j^o$ of $C_j$  in its
complexification: (for $\Im t_j
> 0, < 0$)
\begin{equation} \label{MILLARFORM} \begin{array}{lll}  u_{\lambda}^{j, \C}(t_j)
  &=& \pm i \Phi(t_j, q_j(t), q_j^*(t)) + \frac{1}{\pi} \sum_{n = 1}^m \int_0^{\ell_j} [ \Phi(s_n;
q(t_j), q^*(t_j)) \\ && \\ && + i u_{\lambda}^n (s_n) R(s_n,
q(t_j), q^*(t_j)) ]
\frac{q'(s_n)}{q(s_n) - q(t_j)} ds_n \\ && \\
&& +  \int_0^{\ell} \left[ \Phi(s_n; q(t_j), \bar{q}(t_j)) - i
u_{\lambda} (s_n) R(s_n, q(t_j), q^*(t_j)) \right]
\frac{\bar{q}'(s_n)}{\bar{q}(s_n) - q^*(t_j)}
ds\\ && \\
&& - 2   \int_0^{\ell} u_{\lambda}(s) \frac{\partial B}{\partial
n}(s_n; q(t_j), q^* (t_j))  ds_n. \end{array} \end{equation}
\end{prop}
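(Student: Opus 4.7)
The plan is to mimic the architecture of Proposition \ref{MILLAR} arc-by-arc, treating the self-interaction with $C_j$ exactly as in the smooth closed case while disposing of the cross-interactions with $C_n$, $n\neq j$, by the interior-curve argument of \S\ref{interiorcase}. Concretely, the jump-formula analog of (\ref{green1c}) for a piecewise-smooth boundary reads
\begin{equation*}
u^j_\lambda(q) \;=\; 2\sum_{n=1}^m \int_{C_n}\frac{\partial}{\partial\nu_{\tilde q}}S(\lambda,q,\tilde q)\,u^n_\lambda(\tilde q)\,d\sigma(\tilde q),\qquad q\in C_j^o,
\end{equation*}
and the proposed formula is obtained by continuing each summand separately in $q\mapsto q_j^{\C}(t_j)$, $t_j \in [0,\ell_j]+i(-\epsilon,\epsilon)$ with $\epsilon$ depending on the distance of $t_j$ to the corner parameters $0,\ell_j$.

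For $n\neq j$, since $C_j^o$ is disjoint from $C_n$ by the piecewise-analytic assumption, the function $r^2(s_n,t_j)=(q_n(s_n)-q_j(t_j))(\bar q_n(s_n)-q_j^*(t_j))$ stays uniformly bounded away from zero on any compact subinterval of $(0,\ell_j)$ when $|\Im t_j|$ is small enough. Hence the kernel $\partial_{\nu_{\tilde q}}S$ admits a single-valued holomorphic extension in $t_j$ by the trivial substitution of \S\ref{interiorcase}, producing the $n\neq j$ pieces of (\ref{MILLARFORM}) without any $\pm i\Phi(t_j,\cdot)$ boundary jump. This handles all cross terms.

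For the self-term $n=j$, I write the decomposition (\ref{BASIC}) with $[0,\ell]$ replaced by $[0,\ell_j]$. Lemma \ref{easy} continues verbatim to yield the $\int_0^{\ell_j}i u^j_\lambda(s)R(s,q_j(t_j),q_j^*(t_j))\bigl(\tfrac{q'(s)}{q(s)-q(t_j)}-\tfrac{\bar q'(s)}{\bar q(s)-q_j^*(t_j)}\bigr)ds$ piece and the $\partial B/\partial\nu$ piece, since neither argument uses that $C_j$ is closed. The only genuinely singular piece is $\int_0^{\ell_j}(-u^j_\lambda(s)\partial_\nu A)\log r^2\,ds$, and here I adapt Claim \ref{CLAIM}: define $L(s,t_j)$ by slitting the $t_j$-annulus along $s=\Re t_j$ exactly as before, replace the target formula by $\int_0^{\ell_j}f(s)L(s,t_j)ds \pm 2\pi i \int_0^{t_j}f(s)ds$, and verify (i)--(iii) of the earlier argument. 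Items (i) and (iii) go through unchanged because they are local to the branch cut. For (ii), the integration-by-parts step in (\ref{DERIV}) now produces nonvanishing boundary contributions $F(\ell_j)L(\ell_j,t_j)-F(0)L(0,t_j)$; these are, however, holomorphic in $t_j$ on $C_j^o$ because the parameters $0$ and $\ell_j$ lie strictly off the branch cut $\{s=\Re t_j\}$ for any interior $t_j$. Alternatively, one may invoke the Remark formula (\ref{p5}), in which periodicity plays no role and the branch ambiguity cancels in the logarithm of a ratio; applied to the single arc $[0,\ell_j]$ this directly delivers the analytic continuation of the self-term.

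The main obstacle is uniformity at the corners. The radius of the tube around $C_j^o$ in which all of the above goes through is forced to shrink to zero as $t_j$ approaches the endpoints $0$ or $\ell_j$, for two reasons: the neighboring arcs $C_{j\pm 1}$ emanate from those corners, so the off-diagonal lower bound $r^2(s_n,t_j)\neq 0$ fails in any fixed complex tube; and the branch-cut construction for $\log r^2$ on $C_j$ itself becomes ambiguous as $\Re t_j$ approaches the endpoint of the parameter interval. The statement of Proposition \ref{MILLAR3} accounts for this by asking only for a tube around the \emph{interior} $C_j^o$, so one must verify that on each compact subinterval $[\delta,\ell_j-\delta]\subset(0,\ell_j)$ there is a uniform $\epsilon(\delta)>0$ with all of the estimates above valid; this follows by compactness and the disjointness of the open arcs, and is the only point at which the argument departs nontrivially from the closed analytic case of Proposition \ref{MILLAR}.
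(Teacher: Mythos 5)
Your proof follows essentially the same route as the paper's (rather terse) sketch: Green's jump formula decomposed arc-by-arc, cross terms $n\neq j$ handled by the interior-curve continuation since $r^2(s_n,t_j)$ is bounded away from zero for $t_j$ in the interior, and the self-term $n=j$ handled by transplanting the branch-cut construction of Claim~\ref{CLAIM}. You actually supply more detail than the paper at the one point where there \emph{is} a difference: the paper merely asserts that the $n=j$ argument goes through, whereas the verification of item (i) in Claim~\ref{CLAIM} uses periodicity (to kill the boundary terms in (\ref{DERIV}) and to prove $g\equiv 0$), and that mechanism is unavailable on an open arc. Your observation that the boundary terms $F(\ell_j)L(\ell_j,t_j)-F(0)L(0,t_j)$ are individually holomorphic for $\Re t_j\in(0,\ell_j)$ (because $0$ and $\ell_j$ lie strictly off the cut $s=\Re t_j$), together with the fact that $\int_0^{t_j}f\,ds$ is single-valued since the slit region is simply connected, is the right substitute, and it actually makes the open-arc case \emph{simpler} than the closed one (no mean-subtraction needed, no analogue of the $g\equiv 0$ computation). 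Your remark on corner degeneration correctly explains why the statement is restricted to $C_j^o$.

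One caution: the optional appeal to the alternative formula (\ref{p5}) does not apply verbatim. That formula is derived from $\int_0^{2\pi}\log|e^{is}-e^{it}|^2\,ds=0$ and from $F(\ell)=0$, both of which are specific to a closed periodic parametrization; its $\langle f\rangle$ terms are literally periods. An open-arc analogue would have different boundary contributions, so if you wish to use that route you must rederive the formula rather than cite it. Since your primary argument via the modified Claim~\ref{CLAIM} is self-contained, this does not affect the validity of the proof.
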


\begin{proof} We only sketch the proof, because it only involves
a small modification of Proposition \ref{MILLAR}.

First,  Green's formulae (\ref{GREENSFORMULA})-(\ref{green1})
remain correct in the piecewise analytic case, with the definition
that on  the $j$th component, the normal derivative is calculated
by taking the limit from within the $j$th component.

The verification of the Millar formula is then similar to the
fully analytic case. The main difference is that we now have pairs
$(s_n, t_j)$ of parameter points which may come from different
intervals. When $n = j$ there is no difference in the argument
except that  $C_{j \C}$ is not an annulus but rather two regions
meeting along a common interval. But the same choice of branch of
the logarithm extends $u_j$ holomorphically above and below the
interval, and the first term on the right side ensures that the
two holomorphic extensions agree on the common interval.
 When $n \not= j$, one defines  $\arg r^2(s_n, t_j) = 0$ for all  real $s_n,  t_j$.
 Since $q_n(s_n) \not= q_j(t_j)$ when $n \not= j$ the logarithm
 extends to a holomorphic function in $t_j$ with this choice of
 branch.

\end{proof}

\vspace{2mm}

\section{\label{G}Growth of zeros and growth of $u_{\lambda}^{\C}$}

 Let $C_{\C}$ be the
complexification of a real analytic curve $C \subset \Omega$. The purpose of this section is to give an upper bound for the number of zeros of $u_{\lambda}^{\C}$ in the annulur region $q^{\C}(A (\epsilon))$ where
$A(\epsilon) = \{ z \in \C; e^{-\epsilon} < |z| < e^{\epsilon} \}$. For  $\lambda_j \in Sp(\sqrt{\Delta})$ and for a region $D \subset
C_{\C}$ we denote by \begin{equation}\label{LITTLEN}  n(\lambda_j,
D) = \#\{q^{\C}(t) \in D : u_{\lambda_j}^{\C}(q^{\C}(t) ) = 0 \}.
\end{equation}

The following estimate is suggested by Lemma 6.1 of
Donnelly-Fefferman \cite{DF}.

\begin{prop} \label{DFnew} Suppose that $C$ is a good real analytic curve
in the sense of (\ref{GOOD}).  Normalize $u_{\lambda}$ so that
$||u_{\lambda}||_{L^2(C)} = 1$. Then, there exists a constant $C(\epsilon) >0$ such that
for any $\epsilon >0$,
 $$n(\lambda, Q^{\C}( A(\epsilon/2) ) ) \leq C(\epsilon)  \max_{ q^{\C}(t) \in   Q^{\C}( A( \epsilon) ) } \log
|u_{\lambda}^{\C}(q^{\C}(t))|. $$ \end{prop}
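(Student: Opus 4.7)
The plan is to apply a combination of Jensen's formula on disks and the Poisson--Jensen formula on the full annulus to the single-valued holomorphic function $F(t) := u_\lambda^{\C}(Q^{\C}(t))$ defined on $A(\epsilon)$. Writing $M := \log\max_{A(\epsilon)} |F|$, the claim becomes $n(\lambda, Q^{\C}(A(\epsilon/2))) \leq C(\epsilon)\, M$.

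The first step is to extract one reference point on the unit circle where $|F|$ is bounded below. Since $Q$ is real analytic and $|Q'|$ is bounded above and below on the compact circle $|t|=1$, the normalization $\|u_\lambda\|_{L^2(C)}=1$ is equivalent to $\|F\|_{L^2(S^1)} \geq c_0 > 0$ with $c_0$ depending only on $C$. By a simple pigeonhole (Chebyshev) argument, there is a point $t_\star \in S^1$ with $|F(t_\star)| \geq c_0/\sqrt{2\pi}$, hence $\log|F(t_\star)| \geq -C_0$ for an absolute constant.

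The second step is to split the zero count into ``near $t_\star$'' and ``far from $t_\star$''. Zeros of $F$ in the disk $D(t_\star, \epsilon/4)$ are bounded via Jensen's formula applied on the larger disk $D(t_\star, \epsilon/2) \subset A(\epsilon)$, which yields at most $\tfrac{1}{\log 2}\bigl(M - \log|F(t_\star)|\bigr) \leq C(\epsilon)\,M + C'(\epsilon)$ zeros. For zeros in $A(\epsilon/2) \setminus D(t_\star, \epsilon/4)$, I would apply the Poisson--Jensen formula for the annulus,
\[
\log|F(t_\star)| \;=\; \int_{\partial A(\epsilon)} \log|F|\, d\omega_{t_\star} \;-\; \sum_{z_k \in A(\epsilon)} G_{A(\epsilon)}(t_\star, z_k),
\]
where $\omega_{t_\star}$ is harmonic measure and $G_{A(\epsilon)}$ is the Green's function of $A(\epsilon)$. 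The left-hand side is bounded below by $-C_0$ and the boundary integral is at most $M$, so $\sum_k G_{A(\epsilon)}(t_\star, z_k) \leq M + C_0$. Since the Green's function $G_{A(\epsilon)}(t_\star, z)$ is bounded below by a positive constant $c(\epsilon) > 0$ on the compact set $\{z \in A(\epsilon/2) : |z - t_\star| \geq \epsilon/4\}$, the number of zeros of $F$ there is at most $c(\epsilon)^{-1}(M + C_0) = C(\epsilon)\, M + C''(\epsilon)$.

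Adding the two estimates gives $n(\lambda, Q^{\C}(A(\epsilon/2))) \leq C(\epsilon)\, M + C'''(\epsilon)$, and the additive constant is absorbed into the main term (since in the regime of interest, $M$ grows with $\lambda$ by the exponential growth of the holomorphic continuation of an $L^2$-normalized eigenfunction). The main technical point is the positive lower bound on the annular Green's function $G_{A(\epsilon)}(t_\star, z)$ uniformly in $t_\star \in S^1$ and $z$ bounded away from both $t_\star$ and $\partial A(\epsilon)$; this follows either from the explicit formula for $G_{A(\epsilon)}$ in terms of logarithms of Jacobi theta functions, or abstractly by continuity, positivity, and compactness. A perhaps more elementary alternative, avoiding the Green's function altogether, is to cover $A(\epsilon/2)$ by $O(1/\epsilon)$ Jensen disks whose centers are good points obtained by an averaging over shifts of a regular grid on $S^1$, reducing the proof instead to a lower bound on $\int_{S^1} \log|F|$ obtained by factoring out the zeros of $F$ in a slightly enlarged sub-annulus.
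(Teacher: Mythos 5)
Your approach is essentially the same as the paper's: both proofs rest on a Riesz/Poisson--Jensen decomposition of $\log|u_\lambda^{\C}|$ in the parameter annulus, with the annular Green's function providing the quantitative control on the zero count. In the paper, one writes $\log|U_\lambda| = \sum_j G_\epsilon(\cdot,a_j) + H_\lambda$ with $U_\lambda$ sup-normalized on $A(\epsilon)$, uses the maximum principle to drop $H_\lambda$, and then evaluates at the maximum point of $|U_\lambda|$ over $A(\epsilon/2)$; the uniform bound $G_\epsilon \le \nu(\epsilon)<0$ on $A(\epsilon/2)\times A(\epsilon/2)$ then does the work. You instead evaluate Poisson--Jensen at a fixed $t_\star\in S^1$ chosen by pigeonhole so that $\log|F(t_\star)|\ge -C_0$, and then bound the Green's function below on the relevant compact set. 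These are two ways of picking a good test point, and they buy the same estimate.

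Two small remarks. First, your near/far split is superfluous: with your sign convention $G_{A(\epsilon)}(t_\star,z)\to +\infty$ as $z\to t_\star$, so the lower bound $G_{A(\epsilon)}(t_\star,z)\ge c(\epsilon)>0$ already holds on \emph{all} of $A(\epsilon/2)$ uniformly in $t_\star\in S^1$ (the only place the bound could degenerate is near $\partial A(\epsilon)$, which is excluded), and the separate Jensen-disk count near $t_\star$ can simply be dropped. Second, your final absorption of the additive constant $C_0$ into $C(\epsilon)\,M$ requires $M$ to be bounded below by a positive constant; this does follow, since $\max_{S^1}|F|\ge c_0/\sqrt{2\pi}$, but you should either note that the right-hand side is therefore $\gtrsim 1$ independently of $\lambda$, or adjust the normalization so that $\max_{A(\epsilon/2)}\log|F|\ge 0$, which is exactly what the paper does (and is itself stated a bit loosely there, as $L^2$-normalization on a curve of length $2\pi$ only gives $\max|u_\lambda|\ge (2\pi)^{-1/2}$, not $\ge 1$). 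Neither point affects the validity of the argument.
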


\begin{proof} Let $G_{\epsilon}$ denote  the Dirichlet Green's function of the `annulus'
$Q^{\C}(A(\epsilon)) $. Also, let $\{a_j\}_{j = 1}^{n(\lambda, Q^{\C}(A(\epsilon/2)) )}$
denote the zeros of $u_{\lambda}^{\C}$ in the sub-annulus
$Q^{\C}(A(\epsilon/2))$. Let $U_{\lambda} =
\frac{u_{\lambda}}{||u_{\lambda}||_{Q^{\C} (A(\epsilon))}}$ where
$||u||_{Q^{\C} (A(\epsilon))} = \max_{\zeta \in Q^{\C} (A(\epsilon))} |u(\zeta)|. $ Then,
$$\begin{array}{lll} \log |U_{\lambda}(q^{\C}(t))| & = &  \int_{ Q^{\C} ( ( A(\epsilon/2) ) )}
G_{\epsilon}(q^{\C}(t), w) \ddbar \log |u_{\lambda}(w)| +
H_{\lambda}(q^{\C}(t)) \\ && \\
& = & \sum_{a_j \in Q^{\C}( A(\epsilon/2) ): u_{\lambda}(a_j) = 0} G_{\epsilon}
(q^{\C}(t), a_j) + H_{\lambda}(q^{\C}(t)), \end{array}$$  since  $\ddbar \log |u_{\lambda}(w)|  = \sum_{a_j \in C_{\C}: u_{\lambda}^{\C}(a_j) = 0} \delta_{a_{j}}$. Moreover, the function
$H_{\lambda}$ is  sub-harmonic  on $Q^{\C} (A(\epsilon))$ since
$$\ddbar H_{\lambda} = \ddbar \log |U_{\lambda}(q^{\C}(t))|  - \sum_{a_j \in Q^{\C}(A(\epsilon/2)): u_{\lambda}(a_j) = 0}  \ddbar G_{\epsilon}
(q^{\C}(t), a_j)  = \sum_{a_j \in Q^{\C} (A(\epsilon)) \backslash
 Q^{\C}(A(\epsilon/2)) } \delta_{a_j} > 0. $$
So, by the maximum principle for subharmonic functions,
$$\max_{Q^{\C} (A(\epsilon))} H_{\lambda} (q^{\C}(t)) \leq \max_{\partial Q^{\C} (A(\epsilon))} H_{\lambda} (q^{\C}(t))
= \max_{\partial Q^{\C} (A(\epsilon))} \log |U_{\lambda}(q^{\C}(t))| = 0. $$   It
follows that
\begin{equation} \log |U_{\lambda}(q^{\C}(t))| \leq \sum_{a_j \in Q^{\C}(A(\epsilon/2) ): u_{\lambda}(a_j) = 0} G_{\epsilon}
(q^{\C}(t), a_j),
\end{equation}
hence that
\begin{equation} \max_{q^{\C}(t) \in Q^{\C}( A(\epsilon/2) )} \log
|U_{\lambda}(q^{\C}(t))| \leq \left( \max_{z, w \in Q^{\C}( A(\epsilon/2) )}
G_{\epsilon}(z,w) \right) \;\; n(\lambda, Q^{\C}( A(\epsilon/2) )).
\end{equation}
Now $G_{\epsilon}(z,w) \leq \max_{w \in Q^{\C}( \partial A(\epsilon) )}
G_{\epsilon}(z,w) = 0$ and $G_{\epsilon}(z,w) < 0$ for $z,w \in
Q^{\C}( A(\epsilon/2) )$. It follows that there exists a constant $\nu(\epsilon) <
0$ so that $ \max_{z, w \in Q^{\C}( A(\epsilon/2) )} G_{\epsilon}(z,w) \leq
\nu(\epsilon). $ Hence,
\begin{equation} \max_{q^{\C}(t) \in Q^{\C}( A(\epsilon/2) )} \log
|U_{\lambda}(Q^{\C}(t))| \leq \nu(\epsilon)  \;\; n(\lambda, Q^{\C}( A(\epsilon/2) )).
\end{equation}
Since both sides are negative, we obtain
\begin{equation}\label{mainbound} \begin{array}{lll}  n(\lambda,
Q^{\C}( A(\epsilon/2) )) & \leq &\frac{1}{|\nu(\epsilon)|}  \left| \max_{q^{\C}(t) \in
Q^{\C}( A(\epsilon/2) )} \log |U_{\lambda}(q^{\C}(t))| \right| \\ && \\
& \leq &\frac{1}{|\nu(\tau)|} \left( \max_{q^{\C}(t) \in Q^{\C} (A(\epsilon))}
\log |u_{\lambda}(q^{\C}(t))| - \max_{q^{\C}(t) \in Q^{\C}( A(\epsilon/2) )} \log
|u_{\lambda}(q^{\C}(t))| \right)\\ && \\
& \leq &\frac{1}{|\nu(\epsilon)|} \;\;  \max_{q^{\C}(t) \in Q^{\C} (A(\epsilon))} \log
|u_{\lambda}(q^{\C}(t))|,
\end{array} \end{equation}
where in the last step we use that $\max_{q^{\C}(t) \in Q^{\C}( A(\epsilon/2) )}
\log |u_{\lambda}(q^{\C}(t))| \geq 0$, which  holds since
$|u_{\lambda}^{\C}| \geq 1$ at some point in $Q^{\C}( A(\epsilon/2) )$. Indeed,   by our
normalization, $\|u_{\lambda}\|_{L^{2}(C)} =1$, and so there must already exist points on the real curve $C$ with $|u_{\lambda}| \geq 1$. Putting $C(\epsilon) = \frac{1}{|\nu (\epsilon)|}$ finishes the proof.
\end{proof}
\vspace{1mm}

\begin{rem} An alternative approach is to use
Jensen's formula,
\begin{equation} \label{nform2e}  \begin{array}{lll}
\int_{0}^{\epsilon} n(\lambda, q_{\C}(A(\rho)) d\rho
  && = M_{f_{\lambda}}(\epsilon)  +
  M_{f_{\lambda}}(-\epsilon) - 2 M_{f_{\lambda}}(0), \end{array}
  \end{equation}
  where
$$M_f(\rho) = \left(\frac{1}{2\pi}
\int_{|z| = e^{\rho}  } \log |f_{\lambda}| d \theta \right). $$
However, this would require an analysis of the real logarithmic
integral $ M_{f_{\lambda}}(0)$. As the example of the Gaussian
beam shows,  $M_{f_{\lambda}}(0)$ may be of order $\lambda$ due to
exponential decay away from the `classically allowed region'. We
plan to analyze such integrals elsewhere.

\end{rem}

\section{\label{CNP} Proof of Theorem \ref{CNP} }
\begin{proof}
We prove Theorem \ref{CNP} before Theorem \ref{BNP} since it is easier.
   The proof of Theorem \ref{CNP} uses the analytically continued potential  layer formula (\ref{potential2}) to bound $\max_{Q^{\C}(A(\epsilon))}  \log |\phi_{\lambda}^{\C}|$ from above. Then, an application of
Proposition \ref{DFnew} gives the result.

\subsection{Upper bounds for the analytically continued eigenfunctions.}  Let $q:[0,2\pi] \rightarrow C$ be an arc-length parametrization.  We  denote the parametrization of the boundary, $\partial \Omega,$ by
$\tilde{q}:[0,2\pi] \rightarrow \partial \Omega$.  In this case,  the formula for holomorphic continuation of eigenfunctions is given by (\ref{potential2}):
\begin{equation} \label{int1}
\phi_\lambda^{\C}(q^{\C}(t))= \int_{\partial \Omega}  N(\lambda,q^{\C}(t),\tilde{q}(s)) \, u_\lambda(s) d\sigma(s),
\end{equation}

From the basic Hankel function formula  (\ref{HANKELINT}) for $N(\lambda,q,\tilde{q})$
 and the standard
integral formula
\begin{equation} \label{integral1}
\Ha_{1}(z) = \left( \frac{2}{\pi z} \right)^{\frac{1}{2}} \,  \frac{e^{i ( z - 3\pi/4)}}{\Gamma (3/2)} \, \int_{0}^{\infty} e^{-s} s^{-\frac{1}{2}} \, (1 - \frac{s}{2iz} )^{\frac{1}{2}} \, ds,
\end{equation}
 one easily gets an asymptotic expansion in $\lambda$ of the form:
\begin{equation} \label{potential3}
N( q^{C}(t),\tilde{q}(s);\lambda)  = e^{i \lambda r( q^{\C}(t), \tilde{q}(s)) }  \, \sum_{j=0}^{k} a_{j}( q^{C}(t), \tilde{q}(s)) \, \lambda^{\frac{1}{2}-j} + {\mathcal O}(e^{i \lambda r( q^{\C}(t), \tilde{q}(s)) } \, \lambda^{\frac{1}{2}-k-1}).
\end{equation}
Note that the expansion in (\ref{potential3}) is valid since
for interior curves,
$$C_{0} := \min_{ (q(t), \tilde{q}(s)) \in C \times \partial \Omega } |q(t) - \tilde{q}(s)|^{2}  >0.$$
 Then,   $\Re r^{2}(q^{C}(t), \tilde{q}(s)) >0$ as long as
 \begin{equation} \label{holbranch}
 | \Im q^{C}(t)|^{2} < C_{0}.
 \end{equation}
  So, the principal square root of $r^{2}$ has a well-defined holomorphic extension to the tube (\ref{holbranch}) containing $C$.  We have denoted this square root by $r$ in (\ref{potential3}).

 Substituting (\ref{potential3})
in the analytically continued single layer potential integral formula (\ref{int1}) proves that
for $t \in A(\epsilon)$ and $\lambda >0$ sufficiently large,
\begin{equation} \label{int2}
\phi^{\C}_{\lambda}(q^{\C}(t)) = (2\pi \lambda^{\frac{1}{2}}) \int_{\partial\Omega} e^{i\lambda r (q^{\C}(t), \tilde{q}(s)) } a_{0}(q^{\C}(t),\tilde{q}(s)) ( 1 + {\mathcal O}(\lambda^{-1})  \, ) \, u_{\lambda}(s)  d\sigma(s).
\end{equation}
Taking absolute values of the integral on the RHS in (\ref{int2}) and applying Cauchy-Schwartz proves

\begin{lem} \label{mainlemma1}
For $t \in [0,l] + i[-\epsilon,\epsilon]$ and $\lambda >0$ sufficiently large
$$|\phi^{\C}_{\lambda}(q^{\C}(t))| \leq C_{1} \lambda^{1/2} \exp \, \lambda \left( \max_{\tilde{q}(s) \in \partial \Omega} \Re  \,  i r(q^{\C}(t), \tilde{q}(s))   \right)  \cdot  \| u_{\lambda} \|_{L^{2}(\partial \Omega)}.$$
\end{lem}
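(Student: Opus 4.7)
The plan is to estimate the integral representation (\ref{int2}) in the most direct way possible, moving absolute values inside the integral and extracting the worst-case exponential factor. Since the serious work has already been carried out in deriving the asymptotic expansion (\ref{potential3}) and in verifying that $r^{2}(q^{\C}(t), \tilde q(s))$ admits a well-defined holomorphic square root $r$ throughout the tube (\ref{holbranch}), what is left is essentially bookkeeping.

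First, take absolute values inside (\ref{int2}). The modulus of the oscillatory factor is
\[
\bigl|e^{i\lambda r(q^{\C}(t),\tilde q(s))}\bigr| \;=\; e^{\lambda\,\Re(i\, r(q^{\C}(t),\tilde q(s)))},
\]
so it can be bounded pointwise by its maximum over $\tilde q(s) \in \partial\Omega$, and that maximum pulls out of the integral as the exponential factor in the claimed bound. Second, the amplitude $a_{0}(q^{\C}(t),\tilde q(s))$ is real analytic on the compact set $q^{\C}(\overline{A(\epsilon)})\times \partial\Omega$, which by (\ref{holbranch}) is disjoint from the singular locus $\{r^{2}=0\}$ of the Hankel asymptotics; hence $|a_{0}|$ is uniformly bounded there by a constant depending only on $\Omega, C$ and $\epsilon$. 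The remainder factor $(1+\mathcal{O}(\lambda^{-1}))$ is bounded by $2$ for $\lambda$ large.

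Third, apply Cauchy--Schwarz to the remaining integral in $u_{\lambda}$:
\[
\int_{\partial\Omega}|u_{\lambda}(s)|\,d\sigma(s) \;\leq\; |\partial\Omega|^{1/2}\,\|u_{\lambda}\|_{L^{2}(\partial\Omega)}.
\]
Combining these three steps with the prefactor $2\pi\lambda^{1/2}$ in (\ref{int2}) yields the stated inequality with a constant $C_{1}$ depending only on the uniform amplitude bound and on the arclength of $\partial\Omega$.

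The only point requiring any care --- and thus the main thing to double-check --- is consistency of branch conventions: the single-valued holomorphic $r$ selected by (\ref{holbranch}) must be the same one that arises in the asymptotic expansion (\ref{potential3}) via the integral representation (\ref{integral1}) of $\Ha_{1}$. Once this is noted, the supremum of $\Re(ir)$ over $\tilde q(s)\in\partial\Omega$ is unambiguously defined and equal to the quantity in the statement. No further estimates or analytic continuation arguments are needed.
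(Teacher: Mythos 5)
Your proof is correct and follows the same route as the paper's, which compresses the entire argument into "taking absolute values of the integral on the RHS in (\ref{int2}) and applying Cauchy--Schwartz"; you have simply spelled out the three bookkeeping steps (extracting the worst-case $e^{\lambda\Re(ir)}$ factor, uniform boundedness of $a_0$ and of the $\mathcal{O}(\lambda^{-1})$ remainder, and Cauchy--Schwarz against $\|u_\lambda\|_{L^2(\partial\Omega)}$) that the paper leaves implicit. The remark about consistency of branch choices is a fair point of caution and is in fact the content of the paper's discussion surrounding (\ref{holbranch}).
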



From the pointwise upper bounds in Lemma \ref{mainlemma1}, it is immediate that
\begin{equation} \label{DFupper}
\log \, \max_{q^{\C}(t) \in Q^{\C}(A(\epsilon))} |\phi_{\lambda}^{\C}(q^{\C}(t) )| \leq C_{\max} \lambda  + C_{2} \log \lambda +  \log \| u_{\lambda} \|_{L^{2}(\partial \Omega)},
\end{equation}
where,
$$C_{\max} =  \max_{(q^{\C}(t), \tilde{q}(s)) \in Q^{\C}(A(\epsilon))  \times \partial \Omega} \Re  \,  i r(q^{\C}(t), \tilde{q}(s)).$$


Finally, we use that  $\log
\|u_{\lambda}\|_{L^{2}(\partial \Omega)} = O( \lambda) $ by the assumption that  $C$ is   a good curve and apply Proposition
\ref{DFnew} to get that $n(\lambda,C) = {\mathcal
O}(\lambda).$

\end{proof}

\section{\label{VOLTERRA}  Proof of Theorem \ref{mainthm}: Zeros}

The proof of Theorem \ref{mainthm} is more complicated than that
for interior curves because we need to invert the Volterra
operator  of  Proposition \ref{MILLAR}. We recall that the
analytic continuation of $u$ is the solution of a  Volterra
equation, \begin{equation} \label{VE} (I + K_{\lambda})
u_{\lambda}(t) = U_{\lambda}(t), \end{equation}  where
$U_{\lambda}(t)$ has an explicit analytic continuation, where
\begin{equation} \label{K} K_{\lambda} u_{\lambda}(t) = \int_0^t \frac{\partial R}{\partial \nu}(\lambda, s,
q(t), q^* (t)) u_{\lambda}(s) ds \end{equation}
 in Millar's notation.
Here, $R = A$ is the Riemann function and so explcitly,
$$K_{\lambda} u_{\lambda}(t) = \int_0^t \frac{\partial J_0(\lambda r)}{\partial \nu}(\lambda, s, q(t)
q^* (t)) u_{\lambda}(s) ds.$$ Therefore,
$$K_{\lambda}(t, s) = {\bf 1}_{[0,t]}(s) J_1(\lambda r) r \frac{\partial \log r}{\partial
\nu} (t, s) = {\bf 1}_{[0,t]}(s)  r  J_1(\lambda r)  \left( \frac{q(s)}{q(s) - q(t) } -
\frac{\bar{q}'(s)}{ \bar{q}(s) - q(t)^* }
  \right),  $$ where ${\bf 1}_{[0,t]}$ is the indicator function of the interval $[0,t]$. We note that the pole of $\frac{q(s)}{q(\Re t + is) - q(t)^*}$ at
the upper limit of integration $s = t$ is cancelled because $r
J_1(r)$ begins with $r^2$. So the integrand is regular and
holomorphic along the path of integration.

On the right side of the Volterra equation,
 \begin{equation} \label{RHS} \begin{array}{l} u_{\lambda}^{\C}(t) \mp i \Phi (t,q(t),q^{*}(t))  =  \int_0^{\ell} \left[ \Phi(s;
q(t), \bar{q}(t)) + i u_{\lambda} (s) R(s, q(t), q^*(t)) \right]
\frac{q'(s)}{q(s) - q(t)} ds \\  \\
 +  \int_0^{\ell} \left[ \Phi(s; q(t), q^*(t)) - i
u_{\lambda} (s) R(s, q(t), q^* (t)) \right]
\frac{\bar{q}'(s)}{\bar{q}(s) - q^*(t)}
ds\\  \\
 - 2   \int_0^{\ell} u_{\lambda}(s) \frac{\partial B}{\partial
\nu}(s; q(t), q^* (t))  ds \end{array} \end{equation} the Cauchy
data
 $u_{\lambda}$ is only integrated over   the real domain where by a
 standard Sobolev estimate it has polynomial growth in $\lambda$.
 And further, the Riemann function and other special functions
 occurring there have exponential growth at most given by the
 ambient complexified distance function. The main problem is thus
 to invert the Volterra operator $I + K_{\lambda}$ and to obtain a
 similar growth estimate for $(I + K_{\lambda})^{-1} (RHS)$.

 We first simplify the operator, $K_{\lambda}$.
Given $t = \Re t + i \Im t$ we may choose the contour to go along the
real interval $[0,  \Re t]$ and then to go along the vertical
line segment $\Re t + i s$ for $s \in [0, \Im t]$. This decomposes
$K_{\lambda} = K^{(1)}_{\lambda} + K^{(2)}_{\lambda}$, where

\begin{equation} \label{KONE} K_{\lambda}^{(1)} u_{\lambda}(t) = \int_0^{\Re t} u_{\lambda} (s)
 \frac{\partial}{\partial \nu} R(\lambda; s; q(t), q^*(t)) ds
 \end{equation}
 and where
 $$K_{\lambda}^{(2)} u_{\lambda}(t) = \int_0^{\Im t} u_{\lambda} (\Re t +  is)
 \frac{\partial}{\partial \nu} R(\lambda; \Re t +  is; q(t), q^*(t)) ds. $$
We move the $K^{(1)}_{\lambda}$ term again to the right side since it only
involves the Cauchy data on the real domain.

We now write $t = \Re t + i \Im t$ and treat $\Re t$ as a parameter. We
need to study the mapping properties of $K^{(2)}_{\lambda}$ and
$(I + K^{(2)}_{\lambda})^{-1}$ on the weighted Hilbert space
$L^2([- \epsilon, \epsilon ], e^{- \lambda |\Im t|} d \Im t). $

\subsubsection{Model example.} As a model example, we consider the operator $K_{\lambda} u(y) =
\int_0^y e^{\lambda (y - s) } u(s) ds. $
  A simple calculation
shows that for $n \geq 0,$
$$K_{\lambda}^{n+1} ( y, s) = e^{\lambda(y - s)} \frac{ (y-
s)^n}{n!},
$$
and
$$(I - K_{\lambda})^{-1}(y, s) = e^{(\lambda + 1) (y-s)}. $$
Hence, in the model example, the exponential growth of the kernel
$(I - K_{\lambda})^{-1}(s, \Im t)$ is the same as for $K_{\lambda}(s,\Im t)$.

\subsection{Upper bounds.}
In view of the growth estimate for complex zeros in Proposition \ref{DFnew}, one needs to determine asymptotic pointwise upper bounds for the $|u_{\lambda}^{\C}(t)|$ as $\lambda \rightarrow \infty$.  In this section, we prove:
\begin{lem} \label{mainbound2}
Given $ t \in [0,l] + i[-\epsilon, \epsilon]$ and $\lambda >0$ sufficiently large, there exists a constant $C>0$ such that
$$|u_{\lambda}^{\C}(t)| \leq \exp C \lambda |\Im t|  \cdot \|u_{\lambda}\|_{L^{2}(\partial  \Omega)} .$$
\end{lem}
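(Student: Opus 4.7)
The plan is to exploit the Volterra structure of \eqref{VE} by splitting the contour from $0$ to $t$ into its horizontal and vertical legs, then inverting $I + K_\lambda^{(2)}$ on the vertical leg by a Neumann series with an exponential weight. The model computation quoted in the excerpt $((I - K_\lambda)^{-1}$ has the same exponential growth rate as $K_\lambda$ itself$)$ is exactly the behavior I would exploit.

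First I would estimate the right-hand side. Since $K_\lambda^{(1)} u_\lambda(t)$ only couples to the \emph{real} Cauchy data $u_\lambda|_{[0,\Re t]}$, I move it to the right-hand side, obtaining
\begin{equation*}
(I + K_\lambda^{(2)})\, u_\lambda^{\C}(t) \;=\; V_\lambda(t) \;:=\; U_\lambda(t) \mp i\Phi(t, q(t), q^*(t)) - K_\lambda^{(1)} u_\lambda(t).
\end{equation*}
Each term of $V_\lambda$ is an integral over the \emph{real} interval $[0,\ell]$ of $u_\lambda(s)$ against a kernel built from $J_0(\lambda r)$, $J_1(\lambda r)$ and Cauchy-type denominators $q'(s)/(q(s)-q(t))$ and $\bar q'(s)/(\bar q(s)-q^*(t))$. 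The standard bound $|J_\nu(\lambda r)| \leq C_\nu\, e^{\lambda|\Im r|}$ combined with the uniform estimate $|\Im r(s,t)| \leq C|\Im t|$ for $s\in[0,\ell]$ and $t$ in the tube, plus the cancellation of the $1/(q(s)-q(t))$ singularity at $s=t$ by the factor $r J_1(\lambda r) = O(r^2)$, gives after Cauchy--Schwarz in $s$:
\begin{equation*}
|V_\lambda(t)| \;\leq\; C\,\lambda^{N}\,\exp\!\bigl(C_0\, \lambda\,|\Im t|\bigr)\,\|u_\lambda\|_{L^2(\partial\Omega)}.
\end{equation*}

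Second, I would invert $I + K_\lambda^{(2)}$ by iteration. With $\Re t$ fixed, write $y = \Im t$ and regard $K_\lambda^{(2)}$ as a Volterra operator $\tilde K_\lambda$ in $y$. Its kernel, from \eqref{KONE} evaluated on the vertical segment, is
\begin{equation*}
\tilde K_\lambda(y,s) \;=\; \mathbf{1}_{[0,y]}(s)\cdot r\, J_1(\lambda r)\left(\tfrac{q'(\Re t + is)}{q(\Re t + is)-q(t)} - \tfrac{\bar q'(\Re t + is)}{\bar q(\Re t + is)-q^*(t)}\right),
\end{equation*}
so that $|\tilde K_\lambda(y,s)| \leq C\lambda^{1/2}\exp(C_0\lambda(y-s))$ uniformly. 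The standard Volterra convolution identity then yields
\begin{equation*}
|\tilde K_\lambda^{n}(y,s)| \;\leq\; \frac{(C\lambda^{1/2})^{n}\,(y-s)^{n-1}}{(n-1)!}\,\exp\!\bigl(C_0\lambda(y-s)\bigr),
\end{equation*}
so the Neumann series $(I+\tilde K_\lambda)^{-1} = \sum_{n\ge 0}(-\tilde K_\lambda)^n$ converges absolutely and its kernel is bounded by $\delta(y-s) + C\lambda^{1/2}\exp(C_1\lambda(y-s))$ for a slightly larger $C_1$.

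Third, combining the two estimates via $u_\lambda^{\C}(\Re t+iy) = \int_0^y (I+\tilde K_\lambda)^{-1}(y,s)\, V_\lambda(\Re t + is)\,ds + V_\lambda(\Re t + iy)$ gives
\begin{equation*}
|u_\lambda^{\C}(t)| \;\leq\; C\lambda^{N'}\,\exp\!\bigl(C\lambda|\Im t|\bigr)\,\|u_\lambda\|_{L^2(\partial\Omega)},
\end{equation*}
and absorbing the polynomial prefactor into the exponential by enlarging $C$ gives the statement.

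The main obstacle is the Neumann-series step: one must check that the factorials in the iterated Volterra kernel beat the $\lambda^{1/2}$-factors, and in particular that the behavior of $rJ_1(\lambda r)$ on the vertical contour near $s=y$ (where $r\to 0$) produces no spurious singularities. This is essentially what the model example at the end of the excerpt captures, and it is the technical heart of the argument.
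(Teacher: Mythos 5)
Your proof follows essentially the same route as the paper's: split the Volterra contour into the horizontal leg (which only requires $u_\lambda$ on $\partial\Omega$, hence can be moved to the right-hand side and bounded directly) and the vertical leg, bound the kernel of the vertical Volterra operator by $C\,e^{C'\lambda|\Im t - s|}$ via the Bessel/Hankel estimates and the Taylor bound $|r|\leq C|\Im t - s|$, and then invert $I+K^{(2)}_\lambda$ by a Neumann series where the iterated-simplex factor $(y-s)^{n-1}/(n-1)!$ defeats the exponential $\lambda$-dependence, exactly as in the paper's model example. The only (harmless) discrepancy is that you assert a kernel bound of order $\lambda^{1/2}$ for $\tilde K_\lambda$, whereas $r J_1(\lambda r)$ against the $O(1/r)$ Cauchy factor is actually $O(1)\cdot e^{\lambda|\Im r|}$ (the $\lambda^{1/2}$ you have in mind comes from the full Hankel prefactor $\sqrt{2/(\pi\lambda r)}$, which is not uniform as $r\to 0$); since the factorials absorb even $\lambda^{1/2}$-per-iterate, your slightly generous bound still closes, and the conclusion is the same.
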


\begin{proof}
Let $C_{0} >0$ be a constant.  To bound the kernel $K_{\lambda}^{(2)}(\Im t,s)$ we split the analysis into two cases: (i) $|r(\Re t + is,t)| \leq \frac{1}{C_{0}}$ and (ii)  $|r(\Re t + is,t)| \geq \frac{1}{C_0}$.

\subsubsection{The range $|r| \geq \frac{1}{C_0}$}
In this range,  $J_1$ has an asymptotic expansion  given by
$$J_1(\lambda r ) = \sum_{j=0}^{k} \lambda^{-\frac{1}{2} - j}  a_{j}( r ) e^{\lambda i r }+ {\mathcal O}( \lambda^{-\frac{1}{2} - k -1} e^{\lambda  \Im r } ). $$
From the identity
$$\begin{array}{lll} K_{\lambda}^{(2)} &=&
\partial_{\nu} J_0( \lambda r) \\ &&
  =  {\bf 1}_{[0,\Im t]}(s) \, J_1(\lambda r ) \, r \,
\frac{\partial \log r }{\partial \nu} \\ &&
= {\bf 1}_{[0,\Im t]} (s)\, r \, J_1(\lambda r ) \,
[\frac{q'(s)}{q(s) - q(t)} - \frac{\bar{q}'(s)}{\bar{q}(s) - q(t)^*}],\end{array}
$$

it follows that there exists a symbol
$S_{\lambda}$ of order $-\frac{1}{2}$ such that \begin{equation}
\label{KEST} |K_{\lambda}^{(2)}(\Im t, s)| \leq S_{\lambda}(\Im t, s) \, {\bf  1}_{[0,\Im t]}(s) \,
e^{\lambda |\Im r(\Re t + is, t)| }.
\end{equation}
The estimate (\ref{KEST}) is locally uniform in $\Re t$ and the dependence on the parameter $\Re t$ is implicit.
\subsubsection{The range $|r| \leq \frac{1}{C_0}$}

In this range, the asymptotic expansion breaks down when $|r| \ll \frac{1}{\lambda}$  and so, we use the standard integral representation for $J_{1}$ to get the necessary bounds for $K_{\lambda}^{(2)}$. Since

\begin{equation} \label{besselintegral}
J_{1}( \lambda r ) = - \pi i \int_{0}^{\pi} e^{i \lambda r  \, \cos \theta} \, \cos \theta \, d\theta
\end{equation}
and  $|\cos \theta| \leq 1,$ it follows immediately from (\ref{besselintegral}) that in this range,
\begin{equation} \label{besselintegral2}
|J_{1}( \lambda r ) | \leq C e^{\lambda |\Im  r|},
\end{equation}
and so, when $|r(\Re t + is, t)| \leq \frac{1}{C_0},$
\begin{equation} \label{KEST2}
|K_{\lambda}^{(2)} (\Im t,s) | \leq C  {\bf 1}_{[0,\Im t]}(s) \, e^{\lambda |\Im  r(\Re t + is,t)| }.
\end{equation}

Combining the estimates (\ref{KEST}) and (\ref{KEST2}), it follows that
\begin{equation} \label{KESTmain}
|K_{\lambda}^{(2)} (\Im t,s) | \leq C  {\bf 1}_{[0, \Im t]}(s) \, e^{\lambda |\Im  r(\Re t + is,t) |},
\end{equation}
locally uniformly in $|s| + |\Im t|$ and in $\Re t$. Again, the dependence of $K_{\lambda}$ on the parameter $\Re t$ has been suppressed.

\subsubsection{Pointwise estimates for $r$}
By definition,
\begin{equation} \label{taylor1}
r (\Re t + is, t) = \langle q(\Re t + i \Im t) - q(\Re t + i s), q(\Re t + i \Im t) - q(\Re   t + i s) \rangle^{\frac{1}{2}}
\end{equation}
 Taylor expansion around $s=\Im t$  in (\ref{taylor1}) gives
 \begin{equation} \label{modelbound}
| r(\Re t + is, t) | \leq C |\Im t- s|,
\end{equation}
since,
$$q(t) - q(\Re t + i s) = \int_0^1 \frac{d}{dt} q(\Re t + i ( t \Im t + (1- t) s)) dt = (\Im t - s) \int_0^1 q'(\Re t + i
( t \Im t + (1- t) s)) dt. $$
From (\ref{modelbound}) and the bound (\ref{KESTmain})  it follows  that there are constants $C_{j}>0; j=1,2,$ such that
\begin{equation} \label{K2estimate}
|K_{\lambda}^{(2)}(\Im t,s)| \leq C_{1} {\bf 1}_{[0,\Im t]}(s) e^{C_{2}|\Im t - s|}.
\end{equation}

Next, we expand    $(I -
 K^{(2)}_{\lambda})^{-1}$ in a norm convergent geometric series,
$$\sum_{n = 0}^{\infty} [K_{\lambda}^{(2)}]^{n} (\Im t,s), \;\;; [K^{(2)}_{\lambda}]^{n} (\Im t, s): = \int_0^{\Im t} \int_0^{s_n} \cdots \int_0^{s_1}
K^{(2)}_{\lambda}( \Im t, s_n) \cdots K^{(2)}_{\lambda}( s_1, s) \, ds_1 \cdots ds_n. $$
We recall that the $n$-simplex $\Delta_n$  is defined by
$$\{(s_1, \dots, s_n): 0 \leq s_1 \leq s_2 \leq \cdots \leq s_n
\leq 1\}. $$
Let $\Im t  \cdot \Delta_n$ be the scaled simplex. Applying  the estimate (\ref{K2estimate}) to each factor in the above formula for $[K^{(2)}_{\lambda}]^{n}$  gives the following  pointwise bound:
$$|[K^{(2)}_{\lambda}]^{n}(\Im t,s)| \leq \int_{\Im t \cdot \Delta_n} e^{C\lambda (\Im t -s_n)} \cdot e^{C \lambda (s_{n}- s_{n-1}) } \cdots e^{C \lambda (s_{2}-s_{1})} \cdot e^{C \lambda(s_{1}-s)} \,
 ds_1 \cdots ds_n. $$
So, by the model example,
$$| (I - K^{(2)}_{\lambda})^{-1}(\Im t, s)| \leq e^{ (C+1) \lambda  | \Im t - s|} \cdot   {\bf 1}_{[0,\Im t]}(s).$$
To complete the proof of proof of Lemma \ref{mainbound2}, we note that
 for $q^{\C}(t) \in Q^{\C}( A(\epsilon) ),$ the right side of the analytic continuation formula (\ref{RHS})
together with the $K_{\lambda}^{(1)}$ term satisfies the estimate \begin{equation} \label{RHS2} (**) \leq C_{1}  \exp \left( \lambda \max_{q(s) \in \partial \Omega}  \Re \, i r(t,s)  \right) \cdot  \| u_{\lambda} \|_{L^{2}(\partial \Omega)} \leq  C_{1}  e^{C_{2} \lambda |\Im t|},  \end{equation}
since  by our normalization, $\| u_{\lambda} \|_{L^{2}(\partial \Omega)} =1.$
 It follows that \begin{equation} (I -
K_{\lambda}^{(2)})^{-1} (**) \leq C \int_0^{\Im t} e^{ (C+1) \lambda (\Im t - s)}  e^{C_{2} \lambda s} ds \leq C \exp  (
\lambda \max\{C+1, C_{2}\}  |\Im t| ). \end{equation}
This finishes the proof of Lemma \ref{mainbound2}.
\end{proof}

So,  from Lemma \ref{mainbound2},  $\log \max_{q^{\C}(t) \in Q^{\C}(A(\epsilon))} |\phi_{\lambda}^{\C}(t)|  \leq C  \lambda$ and Proposition \ref{DFnew} then implies that
$n(\lambda;\partial \Omega) = {\mathcal O}(\lambda) $. \qed

\section{\label{CP} Critical points: Proof of Theorems \ref{CRITS} and \ref{mainthm} (2) } 

We now prove part (2) of  Theorem
\ref{mainthm} concerning the  growth of critical points.  It immediately implies Theorem \ref{CRITS}.  The
argument is similar to that for counting zeros, the only change
being that we now take the derivative and restrict to the boundary
in (\ref{green1d}). For the sake of brevity we only sketch the
proof.

In the Dirichlet case, the jumps formula for the double layer
potential  gives (\ref{BASIC}) except that now $u_{\lambda_j}$
denotes the restriction to the boundary of $\frac{\partial
\phi_{\lambda_j} }{\partial \nu}.$ We refer to \cite{T,HZ} for  background.
We then define $n(\lambda_j, D)$ as in (\ref{LITTLEN}) but for the
new $u_{\lambda_j}$. The layer potential representation implies
the analogue of Lemma \ref{mainbound2}
 and by Proposition \ref{DFnew} we conclude that
the number of complex zeros (hence real zeros) is
$O(\lambda)$.

In the Neumann  case,  we must  take the tangential derivative
$u_{\lambda}'(t)$. Since the normal derivative is zero, the
critical points of the tangential derivative are critical points
of the eigenfunction along the boundary. The tangential derivative
now has the representation,
\begin{equation}\label{BASICa}  \begin{array}{lll} u_{\lambda}'(t) && = \frac{1}{2\pi} \int_0^{\ell} (- u_{\lambda}(s)
\frac{\partial^2
  A}{\partial t \partial \nu}(s, t)) \log r^2 ds \\ &&  \\
  && + \int_0^{\ell} (- u_{\lambda}(s) \frac{\partial
  A}{\partial \nu}(s, t)) \frac{\partial}{\partial t} \log r^2 ds\\ && \\
 &&  - \frac{1}{\pi} \int_0^{\ell} u_{\lambda}(s) (\frac{\partial}{\partial t} (A(s, t) \frac{1}{r} \frac{\partial
  r}{\partial \nu})) ds - \frac{1}{\pi} \int_0^{\ell} (- u_{\lambda}(s)
  \frac{\partial^2
  B}{\partial \nu \partial t} (s, t) ds. \end{array} \end{equation}
  The analytic continuation of Proposition \ref{MILLAR} applies equally  to the
  equation (\ref{BASICa}) and the analytic continuation of $u'(t)$ has
  the form (\ref{VE}) with a slight change in $K_{\lambda}$. However, the phase
  function is the same, so  the proof of
  Lemma \ref{mainbound2}  applies with only small modifications  to the new Volterra
  operator, giving
the upper
  bound
  \begin{equation} \label{critbound}
  |\partial_t u_{\lambda}^{\C}(t)| \leq \lambda \exp C \lambda |\Im t|
   \cdot \| u_{\lambda}\|_{L^{2}(\partial  \Omega)} .
   \end{equation}
    Only the exponential growth
   rate is significant here. We then   apply Proposition \ref{DFnew} to
   bound the number of zeros by the maximum of \begin{equation}
   \label{THIRD}  \log \frac{
   |\partial_t u_{\lambda}^{\C}(q^{\C}(t))|}{||\partial_t u_{\lambda} ||_{L^2(\partial
   \Omega}||}  \leq C \lambda + O(\log \lambda)
  + O(\log \frac{||u_{\lambda}||_{L^2(\partial \Omega)}}{||\partial_t u_{\lambda} ||_{L^2(\partial
   \Omega}||}). \end{equation}  Of course this estimate assumes $u_{\lambda}'(t) \not= 0$
  identically, as can happen with radial eigenfunctions on the
  disc. Assuming (\ref{NEUND}), the third term of (\ref{THIRD}) is $O(\lambda)$, completing
   the proof.

\end{document}